\theoremstyle{plain}
\theoremstyle{definition}
\newtheorem*{theorem*}{Theorem}
\newtheorem{thm}{Theorem}[section]
\newtheorem{pr}[thm]{Proposition}
\newtheorem{lem}[thm]{Lemma}
\theoremstyle{definition}
\newtheorem*{dfn*}{Definition}
\newtheorem*{rem*}{Remark}
\def\cleardoublepage{\clearpage\if@twoside \ifodd\c@page\else
	\hbox{}
	\thispagestyle{empty}
	\newpage
	\if@twocolumn\hbox{}\newpage\fi\fi\fi}
\DeclareMathOperator{\Sym}{Sym}
\DeclareMathOperator{\Id}{Id}
\DeclareMathOperator{\ad}{ad}
\keywords{Engel elements, Lie algebras}
\subjclass[2010]{20F45, 20F40}
\begin{document}
	
\title[Left 3-Engel elements in locally finite $p$-groups]{Locally finite $p$-groups with a left 3-Engel element whose normal closure is not nilpotent}
\author[A. Hadjievangelou]{Anastasia Hadjievangelou}
\address{Department of Mathematical Sciences, University of Bath, Claverton Down, Bath BA2 7AY, United Kingdom}
\email{ah926@bath.ac.uk}
\author[M. Noce]{Marialaura Noce}
\address{Department of Mathematical Sciences, University of Bath, Claverton Down, Bath BA2 7AY, United Kingdom}
\email{mn670@bath.ac.uk}
\author[G. Traustason]{Gunnar Traustason}
\address{Department of Mathematical Sciences, University of Bath, Claverton Down, Bath BA2 7AY, United Kingdom}
\email{gt223@bath.ac.uk}

\thanks{}

\begin{abstract}
For any odd prime $p$, we give an example of a locally finite $p$-group $G$ containing a left 3-Engel element $x$ where $\langle x \rangle^G$ is not nilpotent.
\end{abstract}

\maketitle

\section{Introduction}

\mbox{}\\
Let $G$ be a group. An element $a\in G$ is a left Engel element in $G$, if for
each $x\in G$ there exists a non-negative integer $n(x)$ such that
      $$[x,_{n(x)}a]=[[[x,\underbrace {a],a],\ldots ,a]}_{n(x)}=1.$$
If $n(x)$ is bounded above by $n$ then we say that $a$ is a left $n$-Engel element
in $G$. It is straightforward to see that any element of the Hirsch-Plotkin
radical $HP(G)$ of $G$ is a left Engel element and the converse is known
to be true for some classes of groups, including solvable groups and 
finite groups (more generally groups satisfying the maximal condition on
subgroups) [4,1]. The converse is however not true in general and this is the case
even for bounded left Engel elements. In fact whereas one sees readily that 
a left $2$-Engel element is always in the Hirsch-Plotkin radical this
is still an open question for left  $3$-Engel elements. Recently there has been a breakthrough and in [7] it is shown that any left $3$-Engel element of odd order is contained in $HP(G)$. From [12] one also knows that in order to generalise this to left $3$-Engel elements of any finite order it suffices to deal with elements of
order $2$. \\ \\
It was observed by William
Burnside [3] that every element in a group of exponent $3$  is a left $2$-Engel
element and so the fact that every left $2$-Engel element lies in the Hirsch-Plotkin radical can be seen as the underlying reason why groups of exponent
$3$ are locally finite. For groups of $2$-power exponent there is a close link
with left Engel elements. If  $G$ is a group of exponent 
$2^{n}$ then it is not difficult to see that any element $a$ in $G$ of order $2$ is a left $(n+1)$-Engel element of $G$ (see the introduction of [13] for details). For sufficiently large $n$ we know that the variety of groups of exponent $2^{n}$ is not locally finite [6,8]. As a result one can see (for example in [13]) that it follows that for sufficiently large $n$ we do not have in general that a left $n$-Engel element is contained in the Hirsch-Plotkin radical. Using the fact that groups of exponent $4$ are locally finite [11], one can also see that if all left $4$-Engel elements of a group $G$ of exponent $8$ are in
$HP(G)$ then $G$ is locally finite. \\ \\
Swapping the role of $a$ and $x$ in the definition of a left Engel element we get the notion of a right Engel element. Thus an element $a\in G$ is a right Engel element, if for each $x\in G$ there exists a non-negative 
integer $n(x)$ such that 
    $$[a,_{n(x)} x]=1.$$
If $n(x)$ is bounded above by $n$, we say that $a$ is a right $n$-Engel element. By a classical result
of Heineken [5] one knows that if $a$ is a right $n$-Engel element in $G$ then $a^{-1}$ is a left $(n+1)$-Engel
element. \\ \\
In [9] M. Newell proved that if $a$ is a right $3$-Engel element in $G$ then $a\in HP(G)$ and in
fact he proved the stronger result that $\langle a\rangle^{G}$ is nilpotent of class at most $3$. The natural question arises whether the analogous result holds for left $3$-Engel elements. In [10] it has been shown that this is not the case by giving an example of a locally finite $2$-group with a left $3$-Engel element $a$ such that $\langle a\rangle^{G}$ is not nilpotent.
In this paper we extend this result to include any odd prime. The odd case turns out to be more involved and the construction in many ways quite different from the $p=2$ case. \\ \\
The structure of the paper is as follows. In Section 2 we describe the pair $(G,x)$ that will provide our example and we show directly that $x$ is a left $3$-Engel element in $G$. In order to show that $\langle x\rangle^{G}$ is not nilpotent we need more work. In Sections 3 and 4 we construct a pair $(L,z)$ where $L$ is a Lie algebra over ${\mathbb F}_{p}$, the field of $p$ elements, and where $\mbox{Id}(z)$ is not nilpotent. The pair $(L,z)$ can be seen as the Lie algebra version of our group construction and in Section 5
we build a group $H$ within $\mbox{End}(L)$ containing $1+\mbox{ad}(z)$ where $(1+\mbox{ad}(z))^{H}$ is not nilpotent and where $(H,1+\mbox{ad}(z))$ is a homomorphic image of $(G,x)$. 

\section{The group $G$}
\mbox{}\\
Let $x, a_{2}, a_{2},\ldots $ be group variables. We recall that a simple commutator in $x, a_{1}, a_{2}, \ldots $ is a group word defined recursively as follows: $x, a_{1}, a_{2}, \ldots $ are simple commutators and if $u,v$ are simple commutators then $[u,v]$ is a simple commutator.  We say that a simple commutator $s$ has multi-weight $(m,e_{1},e_{2},\ldots )$ in $x, a_{1}, a_{2}, \ldots $, if $x$ occurs $m$ times and $a_{i}$ occurs $e_{i}$ times in $s$. The weight of $s$ is then $m+e_{1}+e_{2}+\cdots $. The following definition will also play a crucial role.
\begin{dfn*}
Let $s$ be a simple commutator of multi-weight $(m,e_{1},e_{2},\ldots )$ in  $x,a_{1},a_{2},\ldots $. The {\it type} of $s$ is $t(s)=e_{1}+e_{2}+\ldots -2m$.
\end{dfn*}

\begin{rem*}
If $u,v$ are simple commutators in $x,a_{1},a_{2},\ldots $ then $t([u,v])=t(u)+t(v)$. In particular $t([u,a_{j}])=t(u)+1$
and $t([u,x])=t(u)-2$. 
\end{rem*} 
\mbox{}\\
For a fixed odd prime $p$, let $G= \langle x, a_1, a_{2}, \dots \rangle$ be the largest group satisfying the following relations:
\begin{enumerate}
    \item $\langle a_i \rangle^G$ is abelian for all $i\geq 1$;
    \item  $\langle x \rangle^G$ is metabelian;
    \item $x^{p}=a_{1}^{p}=a_{2}^{p}=\cdots =1$;
    \item if $s \neq x$ is a simple commutator in $x,a_{1},a_{2},\ldots $, where $t(s) \geq 2$ or $t(s) \leq -2$, then $s=1$.
\end{enumerate}
{\bf Remark}. Notice that $G$ is a locally finite $p$-group. Also, from (4) it follows immediately that if $s$ is a simple commutator in $x,a_{1},a_{2},\ldots$, then $[s,x]=1$ whenever $t(s)\not =1$. This is something we will make use  of later.
\\ \\
%
Let $s$ be a left-normed commuator of weight $3m+1$ in  $x,a_{1},a_{2},\ldots $, starting in $x$.  The reader can convince himself/herself that from (4) it follows that such a commutator will be trivial if it is not of the form $[x,a_{j_{1}},a_{j_{2}},a_{j_{3}},x,a_{j_{4}},a_{j_{5}},\ldots $ $,x,a_{j_{2m}},a_{j_{2m+1}}]$. We will establish later that there are such non-trivial commutators for any $m$ that implies that $\langle x\rangle ^{G}$ is not nilpotent. However the structure of $G$ is not transparent enough at this stage for us to see this directly. Instead  will will  use Lie ring methods to lift up this property from an analogous Lie ring setting that we will deal with in the next two sections. On the other hand we can establish directly that $x$ is a left $3$-Engel element and this we will do now for the rest of this section. We start with a useful lemma.
\begin{lem} Let $g\in G$. We can write $g=\alpha\beta\gamma\delta$ where $\alpha, \beta, \gamma, \delta$ are products of simple commutators in $x,a_{1},a_{2},\ldots $ of types $1,2,-1,0$.
\end{lem}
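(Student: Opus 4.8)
The plan is to show that every $g$ lies in the product set $P=T_{1}\langle x\rangle T_{-1}T_{0}$, where for $j\in\{1,0,-1\}$ I write $T_{j}$ for the subgroup generated by all simple commutators of type $j$. First I would record, straight from relation (4) and the Remark, the handful of facts about types that are actually used. A nontrivial simple commutator has type in $\{-2,-1,0,1\}$, the only one of type $-2$ being $x$ itself, so $T_{-2}=\langle x\rangle$; thus the four factors $\alpha,\beta,\gamma,\delta$ exhaust the admissible types (the factor listed as type $2$ is forced to be trivial by (4), so I read it as the type $-2$ factor $\langle x\rangle$). The commutator of a type-$i$ and a type-$j$ simple commutator has type $i+j$ and is killed by (4) whenever $i+j\notin\{-2,-1,0,1\}$, or when it is a type $-2$ commutator other than $x$. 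In particular $T_{1}$ and $T_{-1}$ are abelian ($[T_1,T_1]\subseteq T_2=1$, while $[T_{-1},T_{-1}]\subseteq T_{-2}$ consists of weight-$\geq 4$ commutators and so vanishes), and $x$ centralises both $T_{0}$ and $T_{-1}$ (their commutators with $x$ have type $-2$ or $-3$ and weight $\geq 3$); only $[T_{1},x]\subseteq T_{-1}$ need not vanish.

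Granting these relations I would prove $P=G$ by induction on the length of a word representing $g$, which (using $1\in P$ and $x^{p}=a_{i}^{p}=1$, so inverses are positive powers) reduces the claim to showing that $P$ is closed under right multiplication by each generator. Multiplying $\alpha\beta\gamma\delta$ by $x$ is immediate: $x$ commutes past $\delta$ and $\gamma$ and merges with $\beta\in\langle x\rangle$. Multiplying by $a_{i}$ is the substantive case. Collecting $a_{i}$ to the front past $\delta,\gamma,\beta,\alpha$ gives $\alpha\beta\gamma\delta\,a_{i}=(a_{i}\alpha)\,\beta\,([\beta,a_{i}]\gamma)\,([\gamma,a_{i}]\delta)\,[\delta,a_{i}]$, where $[\alpha,a_{i}]=1$, the correction $[\beta,a_{i}]\in T_{-1}$ is absorbed into the third factor and $[\gamma,a_{i}]\in T_{0}$ into the fourth, leaving a single type-$1$ straggler $[\delta,a_{i}]$ sitting to the right of the $T_{0}$ factor. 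One then pushes this straggler to the front by the same move, which reproduces the configuration but with a straggler of strictly larger weight.

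The main obstacle is exactly the termination of this straggler cascade: since $G$ is not nilpotent there are nontrivial type-$1$ commutators of arbitrarily large weight, so one cannot argue that the straggler eventually vanishes outright, and the point requiring care is that for a \emph{fixed} $g$ the whole computation is finite. I would control this through the filtration $\Gamma_{n}=\langle\,s:\ s\ \text{a simple commutator of weight}\geq n\,\rangle$, which is normal (conjugating a weight-$\geq n$ commutator by a generator only raises its weight) with each $G/\Gamma_{n}$ nilpotent. In $G/\Gamma_{n}$ the straggler dies once its weight reaches $n$, so the collection terminates at every level; since a word for $g$ involves only finitely many of the $a_{i}$ and, by P.\ Hall's collection, only basic commutators of weight at most its length, the successive decompositions are consistent and assemble to the required identity $g=\alpha\beta\gamma\delta$ in $G$. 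It is the bounded range $\{-2,-1,0,1\}$ of types furnished by (4) that keeps the four buckets closed and the process under control throughout.
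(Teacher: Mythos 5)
Your reading of the statement is correct (the ``type $2$'' factor is really the type $-2$ factor, i.e.\ a power of $x$, exactly as the paper's own proof confirms), and your preliminary facts --- $T_1$ and $T_{-1}$ abelian, $x$ centralising $T_{-1}$ and $T_0$, $[T_1,x]\subseteq T_{-1}$ --- are all genuine consequences of relation (4). The gap is in the cascade itself: the correction terms do not land in the buckets you assign them to. Since $[uv,w]=[u,w]^{v}[v,w]$, the element $[\gamma,a_i]$, for $\gamma$ a product of type $-1$ simple commutators, is a product of \emph{conjugates} of type $0$ simple commutators, and conjugation does not preserve type. Concretely, for $\gamma=[x,a_1][x,a_2]$ one has
$$[\gamma,a_3]=[x,a_1,a_3]\cdot\bigl[[x,a_1,a_3],[x,a_2]\bigr]\cdot[x,a_2,a_3],$$
whose middle factor is a simple commutator of type $-1$: relation (4) does not kill it, and nothing in your argument shows it lies in $T_0$ (in the multi-graded Lie model of Section 3 such elements are precisely the $\zeta_1$-type basis elements, which are generically non-zero). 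So the claim $[\gamma,a_i]\in T_0$ is unjustified and in general false, and your fourth factor $[\gamma,a_i]\delta$ need not be a product of type-$0$ commutators. Re-sorting the stray type $-1$ pieces creates further corrections of higher weight and again of mixed type, so the ``one straggler'' bookkeeping does not close up; you are forced into a collection that defers \emph{all} higher-weight debris, which is exactly the paper's device: it proves by induction on $k$ that $g=\alpha_k\beta_k\gamma_k\delta_k\epsilon(k+1)$, where $\epsilon(k+1)$ is an unsorted product of commutators of weight $\geq k+1$, so no correction ever has to be placed in a bucket at the step where it is created. (Your claim $[\delta,a_i]\in T_1$ suffers from the same conjugation issue, though there it is true after a weight induction; only $[\alpha,a_i]=1$ and $[\beta,a_i]\in T_{-1}$ are immediate.)

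The termination argument also has a hole. The appeal to P.~Hall collection --- ``only basic commutators of weight at most its length'' --- is false: collecting a word of length $\ell$ in a free group produces basic commutators of unbounded weight with non-zero exponents (this is precisely why collection need not terminate there), and the assertion that the decompositions modulo the various $\Gamma_n$ ``are consistent and assemble'' is not justified: having a decomposition modulo every $\Gamma_n$ yields one in $G$ only if the process stabilises, which is the very point at issue. The clean fix, and the one the paper uses, is the observation you side-step: the entire computation for a fixed $g$ takes place inside the finitely generated subgroup $H=\langle x,a_{i_1},\dots,a_{i_r}\rangle$, and since $G$ is a locally finite $p$-group, $H$ is finite, hence nilpotent, so every commutator of weight exceeding the class of $H$ vanishes and the collection stops after finitely many steps. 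With these two repairs --- collect modulo higher weight, terminate by nilpotency of $H$ --- your argument becomes the paper's proof.
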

\begin{proof} We prove first by induction on $k\geq 1$ that we can write $g=\alpha_{k}\beta_{k}\gamma_{k}\delta_{k}\epsilon(k+1)$,
where $\alpha_{k},\beta_{k},\gamma_{k},\delta_{k}$ are products of simple commutators in $x,a_{1},a_{2},\ldots $ of types $1,2,-1,0$ and $\epsilon(k+1)$  is a product of commutators of weight $k+1$ or higher. For the induction basis observe that we can
write $g=x_{1}x_{2}\cdots x_{m}$ where $x_{j}\in \{x,a_{1},a_{2},\ldots \}$ and modulo commutators of weight $2$ or higher we can rearrange the product so that $g=\alpha_{1}\beta_{1}\epsilon(2)$ and where $\alpha_{1}$ is a product of elements from $\{a_{1},a_{2},\ldots \}$, $\beta_{1}$ is a power of $x$ and $\epsilon(2)$ is a product of simple commutators of weight $2$ or higher (here $\gamma_{1}=\delta_{1}=1$). For the induction step suppose that $k\geq 2$ and that the inductive claim holds for smaller values of $k$. Thus we know that we can write $g=\alpha_{k-1}\beta_{k-1}\gamma_{k-1}\delta_{k-1}\epsilon(k)$ where $\alpha_{k-1},\beta_{k-1},\gamma_{k-1},\delta_{k-1}$ are products of simple commutators in $x,a_{1},a_{2},\ldots $ of types $1,2,-1,0$  and where $\epsilon(k)$ is a product of simple commutators of weight $k$ or higher. Working modulo commutators of weight $k+1$ and higher, we can now collect the commutators of weight $k$ so that they appear with the commutator of smaller weight that are of the same type. This
gives us the expression $g=\alpha_{k}\beta_{k}\gamma_{k}\delta_{k}\epsilon(k+1)$ that we wanted.
This finishes the proof of the inductive hypothesis. Notice now that we are working with simple commutators in $x_{1},x_{2},\ldots ,x_{m}$ and as $H=\langle x_{1},\ldots ,x_{m}\rangle$ is nilpotent we have that for a large enough $k$ we have $\epsilon(k+1)=1$. 
\end{proof}
Let $g\in G$. We want to show that $[g,x,x,x]=1$. Let $g=\alpha\beta\gamma\delta$ be an expression for $g$ as in Lemma 2.1. 
As we remarked above, all simple commutators of type different from $1$ commute with $x$. This means that $\beta,\gamma,\delta$ commute with $x$ and without loss of generality we can assume that 
                       $$g=b_{1}\cdots b_{m}$$
where $b_{1},\ldots ,b_{m}$ are simple commutators in $x,a_{1},a_{2},\ldots $ of type $1$. \\ \\
Let $H=\langle x,b_{1},\ldots ,b_{m}\rangle$.
Observe that relations (1) and (2)  imply that $\langle b_{i}\rangle^{H}$ is abelian and $\langle x\rangle^{H}$ is metabelian. Notice
also that  relations (1) and (3) imply that $b_{1}^{p}=\cdots =b_{m}^{p}=1$. Finally, suppose that we have a simple commutator $c$
in $x,b_{1},\ldots ,b_{m}$ of multi-weight $(l,f_{1},f_{2},\ldots ,f_{m})$ in $x,b_{1},\ldots ,b_{m}$ then the type of $c$ as a simple commutator in $x,a_{1},a_{2},\ldots $ is $-2l+f_{1}t(b_{1})+\cdots + f_{m}t(b_{m})=-2l+f_{1}+\cdots +f_{m}$ that is equal to the type of $c$ as a simple commutator in $x,b_{1},\ldots ,b_{m}$. Hence relations (4) hold for $H$ with $x,a_{1},a_{2},\ldots $
replaced by $x,b_{1},\ldots ,b_{m}$. It follows from this that there is a homomorphism $\alpha:G\rightarrow H$ that maps $x,a_{1},\ldots ,a_{m}$ to $x,b_{1},\ldots ,b_{m}$ and $a_{k}$ to $1$ for $k\geq m+1$. In particular $[b_{1}\cdots b_{m},x,x,x]=1$ would
follow  from $[a_{1}\cdots a_{m},x,x,x]=1$. Thus the problem of showing that $x$ is a left $3$-Engel element reduces to showing that $[a_{1}\cdots a_{r},x,x,x]=1$ for all integers $r\geq 1$. Before turning to this we need to introduce some more standard notation.\\

\noindent {\bf Definition.} Let $y,c_{1},c_{2},...,c_{m}$ be group elements. For any non-trivial subset $I=\{i_{1},\ldots ,i_{k}\}$ of $\{1,2,\ldots ,m\}$, where $i_{1}<\ldots <i_{k}$ we denote by {\it $[y,c_{I}]$} the commutator $[y,c_{i_{1}},\ldots ,c_{i_{k}}]$.

\begin{pr} 
We have $[a_{1}\cdots a_{r},x,x,x]=1$ for all integers $r\geq 1$.
\end{pr}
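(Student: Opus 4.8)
The plan is to work inside $N=\langle x\rangle^{G}$, which by relation (2) is metabelian, and to exploit the resulting commutative module structure. Set $A=(\langle x\rangle^{G})'=[N,N]$, an abelian subgroup that is normal in $G$. Since $N/A$ is abelian, the conjugation action of $N$ on $A$ factors through $N/A$, so $A$ becomes a module over the \emph{commutative} ring $\mathbb{Z}[N/A]$; writing $A$ additively, $[u,c]=(\overline{c}-1)\,u$ for $u\in A$ and $c\in N$, where $\overline{c}$ is the image of $c$ in $N/A$. The crucial consequence is that for all $c,d\in N$ the operators $\overline{c}-1$ and $\overline{d}-1$ commute. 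Note that $x\in N$, so $\overline{x}-1$ is one of these operators.

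Writing $w=a_{1}\cdots a_{r}$, I would first expand $[w,x]=\prod_{i}[a_{i},x]^{a_{i+1}\cdots a_{r}}$ and collect each conjugate into simple commutators of the form $[a_{i},x,a_{j_{1}},\ldots,a_{j_{s}}]$, which has type $s-1$; by relation (4) the factors with $s\geq 3$ (type $\geq 2$) vanish, so $[w,x]$ is a product $c_{1}\cdots c_{M}$ of simple commutators of types $-1,0,1$, each lying in $N$ (as $[a_{i},x]\in N$ and $N\trianglelefteq G$). Since $[w,x]\in N$ and $x\in N$, we have $[w,x,x]\in A$. Now $[w,x,x]=\prod_{l}[c_{l},x]^{c_{l+1}\cdots c_{M}}$, and by the Remark every $c_{l}$ of type $\neq 1$ commutes with $x$, so only the type-$1$ factors $c_{l}=[a_{i},x,a_{j},a_{k}]$ survive. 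Each such $c_{l}\in N$ gives $[c_{l},x]\in A$, and conjugation by the tail $c_{l+1}\cdots c_{M}\in N$ acts through $N/A$; since $A$ is abelian this yields, additively,
\[
[w,x,x]=\sum_{l:\,t(c_{l})=1}\mu_{l}\,[c_{l},x]\in A,\qquad \mu_{l}=\overline{c_{l+1}\cdots c_{M}}\in N/A.
\]

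Finally, since $[w,x,x]\in A$ I would compute
\[
[w,x,x,x]=(\overline{x}-1)\,[w,x,x]=\sum_{l}\mu_{l}\,(\overline{x}-1)[c_{l},x]=\sum_{l}\mu_{l}\,[c_{l},x,x],
\]
where the middle equality uses that $\overline{x}-1$ commutes with each $\mu_{l}$ in $\mathbb{Z}[N/A]$. For each surviving $c_{l}$ of type $1$, the commutator $[c_{l},x,x]$ is a simple commutator of type $1-2-2=-3\leq-2$, hence trivial by relation (4); therefore $[w,x,x,x]=1$.

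The step I expect to be the main obstacle — and the real content of the argument — is the commuting manoeuvre $(\overline{x}-1)\mu_{l}=\mu_{l}(\overline{x}-1)$, which lets one slide the third occurrence of $x$ next to the first two and kill the term via $[c_{l},x,x]=1$; this works precisely because $N/A$ is abelian. The supporting bookkeeping must be done with care: one has to verify that $[w,x,x]$ genuinely lies in $A$ and is a $\mathbb{Z}[N/A]$-combination of the $[c_{l},x]$ ranging over the type-$1$ factors of $[w,x]$, and that the non-type-$1$ factors really do drop out. This is routine commutator collection, but it is delicate, because a priori the conjugations occurring in $[w,x,x]$ could reintroduce simple commutators of type $1$; it is exactly the metabelian (hence commutative) module structure that renders those contributions irrelevant.
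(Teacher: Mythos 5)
Your proof is correct, and it takes a genuinely different --- and considerably shorter --- route than the paper's. The paper argues by induction on $r$ and expands $[x,[a_1\cdots a_r,x]]$ completely into left-normed commutators $[x,b_I]$, thereby converting all conjugation corrections into additional commutator factors; among these are nontrivial factors of type $1$ and arbitrarily large weight, namely $[x,[x,a_{I_1}],\ldots,[x,a_{I_m}]]$ with $|I_1|+\cdots +|I_m|=2m+3$, which do not individually commute with $x$. Handling those is the bulk of the paper's proof: a combinatorial analysis of the partition sizes reduces the problem to showing $[e(3),x]=1$, which is then settled by the identity $1=[x,a_1,[x,a_i,a_j,a_k,a_r]]$. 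Your argument never creates such terms: by leaving the tails $c_{l+1}\cdots c_M$ as conjugators --- that is, as coefficients $\mu_l$ in the commutative ring $\mathbb{Z}[N/A]$ --- instead of expanding them into further commutators, you can slide $\overline{x}-1$ past each $\mu_l$ onto $[c_l,x]$, and every term then vanishes individually because $[c_l,x,x]$ is a simple commutator of type $-3$. This is exactly where the metabelian property of $N=\langle x\rangle^{G}$ enters, and it eliminates both the induction on $r$ and all of the partition combinatorics. What the paper's longer computation buys is that it surfaces explicitly the three-term relation (coming from that identity) which reappears as the defining relations $(\mathcal{R}_2)$ and (E9) in the Lie algebra analysis of Sections 3 and 4, whereas your argument keeps that structure hidden; as a proof of Proposition 2.2 alone, your route is the more economical one. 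The two bookkeeping points you flag --- that the non-type-$1$ factors disappear together with their conjugates since $1^g=1$, and that $[w,x,x]$ genuinely lies in $A$ so the module formalism applies --- are both immediate from $[c_l,x]\in A$, $A\trianglelefteq G$ and the Remark following the relations, so your outline is complete as it stands.
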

\begin{proof} We prove this by induction on $r$. For the induction basis simply observe that $t([a_{1},x,x,x])=-5$ and thus $[a_{1},x,x,x]=1$ by relations (4). Now suppose $r\geq 2$ and that the result holds for all smaller values for $r$. First we see from standard
commutator properties that 
 \begin{equation*}
[a_{1}\cdots a_{r},x] = \left(\prod_{I\subseteq \{2,\ldots ,r\}}[a_{1},x,a_{I}]\right)\cdot \left(\prod_{I\subseteq \{3,\ldots ,r\}}[a_{2},x,a_{I}]\right) \dots [a_{r},x].
\end{equation*}
Notice that relations (1) imply that the order of factors within each sub-product above does not matter. The RHS has $s=2^{r}-1$ factors. Suppose these are  $b_{1}, \ldots , b_{s}$ and thus $[a_{1}\cdots a_{r},x]=b_{1}\cdots b_{s}$.  We want to show that 
$[b_{1}\cdots b_{s},x,x]=1$ or equivalently that $[x,b_{1}\cdots b_{s}]$ commutes with $x$. Expanding again we see that 
     $$[x,b_{1}\cdots b_{s}]=\prod_{\emptyset \not =I\subseteq \{1,\ldots ,s\}}[x,b_{I}].$$
Notice that each factor is in $(\langle x\rangle^{G})'$ and as $\langle x\rangle^{G}$ is metabelian the order of the factors does not matter. Notice also that every non-trivial factor is of the form $[x,[x,a_{I_{1}}]^{-1},\ldots ,[x,a_{I_{l}}]^{-1}]$ where, because of relations (1), the sets $I_{1},\ldots , I_{l}$ are pairwise disjoint. We also have that $\mbox{min}(I_{1})<\mbox{min}(I_{2})<\ldots <\mbox{min}(I_{l})$. Let $A$ be the product of such factors where not all $a_{1},a_{2},\ldots ,a_{r}$ are included and let $B$
be the product of those factors where each $a_{1},a_{2},\ldots ,a_{r}$ occurs once. Then 
            $$[x,b_{1}\cdots b_{s}]=AB.$$
It is not difficult to see by standard arguments that the induction hypothesis implies that $A$ commutes with $x$. We are thus left
with showing that $B$ commutes with $x$. Now let 
$$
[x,[x,a_{I_{1}}],\ldots ,[x,a_{I_{m}}]]^{(-1)^{m}}
$$
be one of the factors 
of $B$. This will commute with $x$ if the type is not $1$. As $I_{1}\cup\cdots \cup I_{m}=\{1,2,\ldots ,r\}$ we have that
the  $t([x,[x,a_{I_{1}}],\ldots ,[x,a_{I_{m}}]])=r-2(m+1)$ that is equal to  $1$ if and only if $r=2m+3$. We are thus  left with 
showing that $e(m)$ commutes with $x$ where 
                       $$e(m)=\prod_{(I_{1},I_{2},\ldots ,I_{m})}[x,[x,a_{I_{1}}],\ldots ,[x,a_{I_{m}}]].$$
Here the product is taken over all partitions of $\{1,2,\ldots ,2m+3\}$ into a disjoint union of $m$ non-empty subsets where 
               $$1=\mbox{min}(I_{1})<\mbox{min}(I_{2})<\ldots <\mbox{min}(I_{m}).$$
As $\langle x\rangle^{G}$ is metabelian the value of a factor does not change if we permute the terms $[x,a_{I_{2}}],\ldots ,[x,a_{I_{m}}]$. Notice also that for $[x,[x,a_{I_{1}}]]$ to be non-trivial we need $|I_{1}|=3$. If some $I_{j}$ has size less than $2$
where $2\leq j\leq m$ then (because we can permute the terms $[x,a_{I_{2}}],\ldots ,[x,a_{I_{m}}]$ with out changing the value) we
see that the factor is trivial as $[x,[x,a_{I_{1}}],[x,a_{I_{j}}]]$ has type less than $-1$. For a factor of $e(m)$ to be non-trivial we thus need all of $I_{2},I_{3},\ldots ,I_{m}$ to have size $2$ or $3$. Suppose $s$ are of size $2$ and $t$ of size $3$. Then
the we get the equations
  \begin{eqnarray*}
                     s+t & = & m-1 \\
                  2s+3t & = & 2m.
\end{eqnarray*}
That has the solution $t=2$ and $s=m-3$ (in particular $m\geq 3$). This shows that 
       $$e(m)=\prod_{(I_{1},I_{2},\ldots ,I_{m})}[x,[x,a_{I_{1}}],\ldots ,[x,a_{I_{m}}]]$$
where the product is taken over all partitions of $\{1,2,\ldots ,2m+3\}$ where $|I_{1}|=|I_{2}|=|I_{3}|=3$ and
$|I_{4}|=\ldots =|I_{m}|=2$ and $1\in I_{1}$. \\ \\
Consider any factor of this product. As $[x,a_{I_{4}}],\ldots ,[x,a_{I_{m}}]$ commute with $x$, we have
     $$[e(m),x]=\prod_{(I_{1},I_{2},\ldots ,I_{m})}[x,[x,a_{I_{1}}],[x,a_{I_{2}}],[x,a_{I_{3}}],x,[x,a_{I_{4}}],\ldots ,[x,a_{I_{m}}]].$$
It follows from this that in order to show that $[e(m),x]=1$ for all $m\geq 1$, it suffices to show  that $[e(3),x]=1$.  \\ \\
To see this take any $2\leq i<j<k<r\leq 9$. We have 
        $$1=[x,a_{1},[x,a_{i},a_{j},a_{k},a_{r}]]=\prod_{(I_{1},I_{2})}[x,a_{I_{1}},x,a_{I_{2}}].$$ 
where the product is taken over all partitions of $\{1,i,j,k,r\}$ where $|I_{1}|=3$, $|I_{2}|=2$ and $1\in I_{1}$. From this it follows
that 
       $$e(3)=\prod_{(I_{1},I_{2},I_{3})}[x,[x,a_{I_{1}}],[x,a_{I_{2}}],[x,a_{I_{3}}]]$$
is trivial and thus  in particular $[e(3),x]=1$. This finishes the proof. 
\end{proof}
\mbox{}\\
As a consequence we have the following theorem. 
\begin{thm}
The element $x$ is a left $3$-Engel element in $G$.
\end{thm}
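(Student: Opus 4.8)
The plan is to verify directly that $[g,x,x,x]=1$ for every $g\in G$, at which point the definition gives that $x$ is a left $3$-Engel element. Almost all of the work has already been carried out, so the proof amounts to assembling the pieces in the right order. First I would take an arbitrary $g\in G$ and apply Lemma 2.1 to write $g=\alpha\beta\gamma\delta$, where $\alpha,\beta,\gamma,\delta$ are products of simple commutators of types $1,2,-1,0$ respectively. By the Remark following the defining relations (which notes that $[s,x]=1$ whenever $t(s)\neq 1$), each of $\beta,\gamma,\delta$ commutes with $x$, and therefore so does the product $c=\beta\gamma\delta$.

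The one genuine verification is the reduction to $\alpha$. Writing $g=\alpha c$ with $[c,x]=1$, I would check that $[\alpha c,x]=[\alpha,x]^{c}$ and, more generally, that conjugation by $c$ commutes with the operator $[\,\cdot\,,x]$ precisely because $c$ centralises $x$; iterating three times gives
$$[g,x,x,x]=c^{-1}[\alpha,x,x,x]\,c.$$
Thus $[g,x,x,x]=1$ if and only if $[\alpha,x,x,x]=1$, so we may assume $g=\alpha=b_{1}\cdots b_{m}$ is a product of simple commutators of type $1$.

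Next I would invoke the homomorphism argument already set up in the text: setting $H=\langle x,b_{1},\ldots ,b_{m}\rangle$, one checks that relations (1)--(4) hold with $x,a_{1},\ldots ,a_{m}$ replaced by $x,b_{1},\ldots ,b_{m}$, the crucial point being that the type of a commutator in the $b_{i}$, computed in $x,a_{1},a_{2},\ldots$, agrees with its type in $x,b_{1},\ldots ,b_{m}$. This yields a homomorphism $G\to H\leq G$ fixing $x$, sending $a_{i}\mapsto b_{i}$ for $i\leq m$ and $a_{k}\mapsto 1$ for $k>m$. Applying it to the identity $[a_{1}\cdots a_{m},x,x,x]=1$ supplied by Proposition 2.2 gives $[b_{1}\cdots b_{m},x,x,x]=1$, which is exactly $[g,x,x,x]=1$. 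Since $g$ was arbitrary, $x$ is a left $3$-Engel element.

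I do not expect any serious obstacle, since the inductive heart of the matter lives in Proposition 2.2 and the remaining content is the bookkeeping above. The only points requiring care are the conjugation identity $[g,x,x,x]=c^{-1}[\alpha,x,x,x]c$, where one must genuinely use $[c,x]=1$ rather than mere commutativity of the factors, and the verification that relation (4) transfers to $H$ via the type computation so that the reducing homomorphism exists.
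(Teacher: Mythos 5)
Your proposal is correct and follows essentially the same route as the paper: Lemma 2.1 to split off the factors of type $\neq 1$, the observation that these centralise $x$, the homomorphism $G\to H=\langle x,b_{1},\ldots ,b_{m}\rangle$ built from the type computation, and finally Proposition 2.2. The only difference is cosmetic: you spell out the conjugation identity $[g,x,x,x]=c^{-1}[\alpha,x,x,x]c$ behind the paper's ``without loss of generality'' reduction, which is a worthwhile detail but not a new idea.
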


\section{The Lie algebra $L$}
\mbox{}\\
In this section we will construct a pair $(L,z)$ that will provide us with the Lie algebra analogue of $(G,x)$. Most of this section will be about describing the Lie algebra $L$ and getting some close transparent information about $\mbox{Id}(z)$, the ideal in $L$ generated by $z$. This will then be used in Section 4 to prove that $\mbox{Id}(z)$ is not nilpotent. Throughout the paper $p$ is a fixed odd prime and ${\mathbb F}={\mathbb F}_{p}$ is the field of $p$ elements. \\ \\ 
We will start with a result that is probably well known although we do not have a reference. For that reason and for the convenience of the reader we will include a proof. We first need some notation. Let $z,c_{1},c_{2},\ldots $ be some Lie algebra variables. For any non-trivial multi-set $\{c_{i_{1}},c_{i_{2}},\ldots ,c_{i_{k}}\}$ with $i_{1}\leq i_{2}\cdots \leq i_{k}$ we
denote by $[z,c_{I}]$ the left normed Lie commutator $[z,c_{i_{1}},\ldots ,c_{i_{k}}]$. 
\begin{pr} Let $E$ be the largest Lie algebra over ${\mathbb F}$ generated $z,c_{1},c_{2},\ldots $ where $c_{1},c_{2},\ldots $ commute. Then $\mbox{Id}(z)$ is freely generated by $\langle [z,c_{I}],\,I \subset_{fm} {\mathbb N^{m}}\rangle$, where $I$ runs through all finite multi-subsets of ${\mathbb N^{m}}$. Here ${\mathbb N^{m}}$ is the multi-set variant of the set of natural numbers namely $\{1,1,\ldots ,2,2,\ldots ,\ldots\}$. 
\end{pr}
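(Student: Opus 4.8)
The plan is to identify $\mathrm{Id}(z)$ with an abstractly free Lie algebra by building a concrete model in which the proposed generators are visibly free, and then to transport that freeness back to $E$ by means of the universal property that defines $E$ as the \emph{largest} Lie algebra of its kind.

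First I would record the easy half, namely that the elements $[z,c_I]$ generate $\mathrm{Id}(z)$ as a Lie algebra. Writing $S$ for the subalgebra they generate, one checks that $\mathrm{ad}(c_j)$ carries the generator $[z,c_I]$ to $[z,c_{I\cup\{j\}}]\in S$; since $\mathrm{ad}(c_j)$ is a derivation it therefore preserves $S$, so $[S,c_j]\subseteq S$, while $[S,z]\subseteq[S,S]\subseteq S$ because $z=[z,c_\emptyset]\in S$. As $E$ is generated by $z$ and the $c_j$, and the set of $w$ with $[w,S]\subseteq S$ is a subalgebra containing these generators, $S$ is an ideal containing $z$; hence $S=\mathrm{Id}(z)$.

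The heart of the argument is the construction of a model. Let $P$ be the free Lie algebra over $\mathbb{F}$ on abstract symbols $\{y_I\}$, one for each finite multiset $I$ over $\mathbb{N}$, and for each $j$ define a derivation $\partial_j$ of $P$ by $\partial_j(y_I)=y_{I\cup\{j\}}$ on the free generators (here $\cup$ is multiset union). Because multiset union is commutative, $\partial_i$ and $\partial_j$ agree on generators after composition in either order, so $[\partial_i,\partial_j]$ is a derivation vanishing on generators and hence is $0$. Thus $c_j\mapsto\partial_j$ defines a Lie homomorphism from the abelian algebra $A=\bigoplus_j\mathbb{F}c_j$ into $\mathrm{Der}(P)$, and I form the semidirect product $\tilde E=A\ltimes P$. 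Setting $\tilde z=y_\emptyset$, an immediate induction gives $[\tilde z,c_I]=(-1)^{|I|}y_I$, so the elements $[\tilde z,c_I]$ are, up to sign, exactly the free generators of $P$; in particular $\tilde E$ is generated by $\tilde z$ and the (commuting) $c_j$, and $\mathrm{Id}_{\tilde E}(\tilde z)=P$ is free on them.

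Finally I would glue the two pictures together. The universal property of $E$ yields a homomorphism $\phi:E\to\tilde E$ with $z\mapsto\tilde z$ and $c_j\mapsto c_j$, while freeness of $P$ yields $\psi:P\to E$ with $y_I\mapsto(-1)^{|I|}[z,c_I]$, which by the first step surjects onto $\mathrm{Id}(z)$. The composite $\phi\circ\psi$ fixes every $y_I$, so $\phi\circ\psi=\mathrm{id}_P$; hence $\psi$ is injective as well as surjective, and therefore an isomorphism $P\xrightarrow{\sim}\mathrm{Id}(z)$ carrying $y_I$ to $\pm[z,c_I]$. This is precisely the assertion that $\mathrm{Id}(z)$ is freely generated by the $[z,c_I]$. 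The one genuinely delicate point is freeness, i.e.\ the absence of hidden relations forced by the commuting of the $c_i$; the model $\tilde E$ is engineered precisely so that $\phi$ supplies a one-sided inverse to $\psi$, and everything else---generation, well-definedness of the $\partial_j$, and the verification that the semidirect product is a Lie algebra---is routine.
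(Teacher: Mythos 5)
Your argument is correct, but it follows a genuinely different route from the paper's. The paper passes to the enveloping algebra $A$ of $E$ (the largest associative algebra on $z,c_1,c_2,\dots$ in which the $c_i$ commute), and proves by a lexicographic-ordering, leading-term computation that the associative subalgebra generated by the $[z,c_I]$ is a free associative algebra on these elements; Lie freeness then follows from the standard fact that Lie elements which freely generate a free associative subalgebra also freely generate a free Lie subalgebra, and the fact that the $[z,c_I]$ generate $\mathrm{Id}(z)$ is taken as read. You instead manufacture an external model: the free Lie algebra $P$ on symbols $y_I$, the commuting derivations $\partial_j\colon y_I\mapsto y_{I\cup\{j\}}$, and the semidirect product $\tilde{E}=A\ltimes P$, in which $\mathrm{Id}_{\tilde{E}}(\tilde{z})=P$ is free by construction; the universal property of $E$ as the \emph{largest} such algebra supplies $\phi\colon E\to\tilde{E}$, freeness of $P$ supplies $\psi\colon P\to E$ with image $\mathrm{Id}_E(z)$, and the retraction identity $\phi\circ\psi=\mathrm{id}_P$ forces $\psi$ to be an isomorphism onto $\mathrm{Id}_E(z)$. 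Your route is more self-contained: it needs neither the PBW-type assertion that $E$ sits inside $A$ as the Lie subalgebra generated by $z,c_1,c_2,\dots$, nor the Witt--Shirshov theorem just quoted, and it makes explicit the generation step (via the normalizer argument and, implicitly, the reordering identity $[u,c_i,c_j]=[u,c_j,c_i]$, which is worth stating) that the paper leaves unproved. What the paper's computation buys in return is a stronger conclusion---the $[z,c_I]$ even generate a free \emph{associative} subalgebra, with an explicit leading-term normal form---whereas your construction buys conceptual transparency and fewer imported theorems. Both arguments establish exactly the stated proposition.
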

\begin{proof}
Let $A$ be the enveloping algebra of $E$, that is the largest associative algebra generated by $z,c_{1},c_{2},\ldots $ where $c_{1},c_{2},\ldots $ commute. Thus $E$ is the Lie sub-algebra generated by $z,c_{1},c_{2},\ldots$ where the Lie product of $u,v\in A$ is $[u,v]=uv-vu$. In order to prove the proposition it suffices to show that the associative sub-algebra $C$ generated by the set $\{[z,c_{I}],\,I \subset_{fm} {\mathbb N^{m}}\}$ is freely generated as an associative algebra by these elements. 
To see this, pick a variable $y_{I}$ for each finite multi sub-set of ${\mathbb N}^{m}$ and consider the free associative algebra $B$, freely generated by these. Order the generators of $A$ with $c_{1}<c_{2}<\cdots <z$ and use these to define a lexicographical order on words in these. We also order the generators $y_{I}$ by 
                             $$y_{I}<y_{J}\mbox{ if and only if }zc_{i_{1}}\cdots c_{i_{r}}<zc_{j_{1}}\cdots c_{j_{s}}$$
where $I=\{i_{1},\ldots ,i_{r}\}$, $J=\{j_{1},\ldots ,j_{s}\}$, $i_{1}\leq \cdots \leq i_{r}$ and $j_{1}\leq \cdots \leq j_{s}$. 
We are here using the lexicographical order above for the displayed inequalities. Notice that if we expand $[z,c_{I}]$ then the
leading term with respect to that lexicographical order is $zc_{i_{1}}\cdots c_{i_{r}}$. We will denote the latter by $zc_{I}$. We then use this order do define a lexiographical order on words in the $y_{I}$'s. 
Now consider the algebra homomorphism from $B$ to $A$ that maps $y_{I}$ to $[z,c_{I}]$. In order to show that $C$ is freely generated by the $[z,c_{I}]$'s it suffices to show that this homomorphism is injective. Thus let $y$ be a non-zero element in $B$ say
                $$y=\alpha y_{I_{1}}\cdots y_{I_{r}}+\cdots +\beta y_{J_{1}}\cdots y_{J_{s}}$$
where $\alpha y_{I_{1}}\cdots y_{I_{r}}$ is the leading term of $y$ with respect to the lexicographical ordering on $B$ described above. From the definition of this lexicographical order, we see that 
               $$\bar{y}=\alpha zc_{I_{1}}\cdots zc_{I_{r}}+\cdots +\beta zc_{J_{1}}\cdots zc_{J_{s}}$$
has leading term $\alpha zc_{I_{1}}\cdots zc_{I_{r}}$ with respect to the lexicographical order on $A$. But notice that this is also the leading term of the image of $y$ under the homomorphism, namely
           $$\tilde{y}=\alpha [z,c_{I_{1}}]\cdots [z,c_{I_{r}}]+\cdots +\beta [z,c_{J_{1}}]\cdots [z,c_{J_{s}}].$$
This shows that the image of $y$ is non-zero and thus the homomorphism is injective. This finishes the proof. 
\end{proof}
\mbox{}\\
Thus $\mbox{Id}_{E}(z)$ is a free Lie algebra, freely generated by the $[z,c_{I}]$'s. For the following we will use a slightly different total order on this set of free generators. If $I$ and $J$ are both non-empty, then $[z,c_{I}]<[z,c_{J}]$ as above. However if one of the multi-sets is empty we define the order differently so that $z$ is the largest element. That is $[z,c_{I}]<z$ whenever $I$ is non-empty. Now consider the largest metabelian quotient,  $M(z)$, of $\mbox{Id}(z)$. This is the free metabelian Lie algebra
over ${\mathbb F}$ with free generators the $[z,c_{I}]$'s.  Now it is well known (see for example [2]), that $M(z)$ has a basis consisting of the left-normed Lie commutators 
      $$
[[z, c_{I_1}],[z, c_{I_2}], \dots, [z, c_{I_m}]]
$$
where $m \geq 1$ and  $[z, c_{I_1}] > [z, c_{I_2}] \leq \dots \leq [z, c_{I_m}]$. \\ \\
We are now getting ready to describe our Lie algebra $L$. First we consider the  largest Lie algebra $F=\langle z, c_1, c_2, \dots \rangle$ over $\mathbb{F}_p$, such that 
\begin{enumerate}
    \item $\Id(c_i)$ is abelian for $i=1,2, \dots$;
    \item $\Id(z)$ is metabelian.
\end{enumerate}
From the analysis above, the following is a basis for $\Id_F(z)$ 
\begin{equation}\tag{*}\label{*}
[[z, c_{I_1}],[z, c_{I_2}], \dots, [z, c_{I_m}]]
\end{equation}
where $m \geq 1$, $I_1, \dots, I_m$ are pairwise disjoint and $[z, c_{I_1}] > [z, c_{I_2}]\leq \dots \leq [z, c_{I_m}]$. 
We define a type of a Lie commutator in a similar way as we did for group commutators in Section 1. This term will play an important role in this section.

\begin{dfn*}
 Let $s$ be a simple commutator of multi-weight $(m,e_1, \dots, e_r)$ in $z, c_1, \dots, c_r$. The {\it type} of $s$ is $t(s)=e_1+\dots +e_r-2m$.
\end{dfn*}

\begin{rem*} If $c$ and $d$ are simple commutators then $t([c,d])=t(c)+t(d)$. In particular
$t([c,x])=t(c)-2$ and $t([c,a_{1}])=t(c)+1$. 
\end{rem*}
\mbox{}\\
We will now construct a Lie algebra $L$ that will be a quotient of $F$ by a certain multi-homogeneous ideal $J$. Note that since $F$ is multi-graded, $L$ will also be multi-graded.
Our multi-homogeneous ideal will be the smallest ideal of $F$ containing the following elements from our basis for $\Id_{F}(z)$ described above. Let $c=[[z,c_{I_1}],[z,c_{I_2}],\dots,[z,c_{I_m}]]$ be one of these basis elements. \\
\begin{enumerate}[resume]
    \item $c$ is in $J$ if $c \neq z$ is a commutator of type either less than -1 or more than 1;
    \item $c$ is in $J$ if one of $I_1,I_3, \dots, I_m$ has size greater than 2.
\end{enumerate}
In this and following section we will determine the structure of $J$ and prove that $\Id_L(z)$ is not nilpotent.

\subsection{Some detailed analysis about which basis elements are in $J$} \label{3.1}
One can see directly that a number of the commutators in ($\ast$) will be in $J$ as a consequence of conditions (3) and (4). In this section, we will now look more closely at these commutators and see that a number of elements are in $J$ as a less direct consequence.
Consider one of the basis elements, $[[z, c_{I_1}],[z, c_{I_2}], \dots, [z, c_{I_m}]]$, from ($\ast$) above. Note that, in particular, by condition (3) we have that $[z, c_{I}]$ is in $J$  when $|I| \geq  4$. \label{1}
In the following we will denote with $\alpha$ the number of $I_i$ of size 1 and $\beta$ those of size 2, for $i\geq 3$.
\begin{rem*} Notice that, since $\Id(z)$ is metabelian,  it does not matter how we order $I_3, \dots, I_m$ as they can be permuted without changing the value of the Lie commutator. In fact, the only constraint is that the smallest $i$ in $ I_1 \cup \dots \cup I_m$ should be in $I_2$, since $[z, c_{I_1}] > [z, c_{I_2}] \leq \dots \leq [z, c_{I_m}]$.
\end{rem*}
\noindent{\bf Commutators of type -1 where $|I_1|+|I_2|=3$.}\label{3.1.1}
In this case we have $t([z, c_{I_1}], [z, c_{I_2}])=-1$ and $|I_3|+\dots+|I_m|=2m-4$. Notice that none of $I_3, \dots, I_m$ is empty or of size 1 since otherwise the commutator would be in $J$ by condition (3). Then all commutators are in $J$ except possibly those where

$$|I_1|+|I_2|=3, |I_3|=\dots=|I_m|=2, |I_1|\leq 2.$$
\mbox{} \\
\noindent{\bf Commutators of type -1 where $|I_1|+|I_2|=4$.}
Here $t([z, c_{I_1}], [z, c_{I_2}])=0$ and $|I_3|+\dots+|I_m|=2m-5$. As before, none of $I_3, \dots, I_m$ is empty since otherwise the commutator would be in $J$ by condition (3). Furthermore none of the sets is of size greater than 2 by condition (4). We thus obtain the following system of equations
$$
\systeme*{\alpha + \beta = m-2, \alpha + 2\beta = 2m-5}
$$
whose solution is $\alpha = 1$ and $\beta=m-3$.
Therefore all commutators are in $J$ except possibly those where
$$|I_1|+|I_2|=4,  |I_3|=1, |I_4|=\dots=|I_m|=2, |I_{1}|\leq 2.$$

\noindent {\bf Commutators of type -1 where $|I_1|+|I_2|=5$.}
We have $t([z, c_{I_1}], [z, c_{I_2}])=1$ and $|I_3|+\dots+|I_m|=2m-6$. First, we consider the case when $I_3$ is empty. In this case $t([z, c_{I_1}], [z, c_{I_2}],[z, c_{I_3}])=-1$ and none of  $I_4, \dots, I_m$ are of size 1. Thus they are all of size 2, and all commutators are in $J$ except possibly the cases
$$|I_1|=2, |I_2|=3, |I_3|=0, |I_4|+\dots+|I_m|=2.$$  
Now it remains to deal with the case for which none of $I_3, \dots, I_m$ is empty. Using $\alpha$ and $\beta$ as before, we have
$$
\systeme*{\alpha + \beta = m-2, \alpha + 2\beta = 2m-6}
$$
whose solution is $\alpha = 2$ and $\beta=m-4$. Thus all commutators are in $J$ except possibly the cases where
$$|I_1|=2, |I_2|=3, |I_3|=|I_4|=1, |I_5|=\dots=|I_m|=2.$$

\noindent {\bf Commutators of type 0 where $|I_1|+|I_2|=3$.}
In this case none of $I_3, \dots, I_m$ are empty or of size 1, as  otherwise the commutator is in $J$ by condition (3). Also, none are of size 3 by condition (4). Therefore, they must all be of size 2, which implies that the type is -1, that is a contradiction since we supposed that the type is 0. \\

\noindent{\bf Commutators of type 0 where $|I_1|+|I_2|=4$.}
We have that none of $I_3, \dots, I_m$ is empty. Using $\alpha$ and $\beta$ as before, we have
$$
\systeme*{\alpha + \beta = m-2, \alpha + 2\beta = 2m-4}
$$
whose solution is $\alpha = 0$ and $\beta=m-2$. Thus all commutators are in $J$ except possibly those where

$$|I_1|+|I_2|=4, |I_3|=\dots=|I_m|=2, |I_1|\leq 2.$$

\noindent{\bf Commutators of type 0 where $|I_1|+|I_2|=5$.}
First, notice that none of $I_3, \dots, I_m$ is empty. Indeed, if $I_3$ is empty then $t([z, c_{I_1}], [z, c_{I_2}],[z, c_{I_3}])=-1$ and the commutator has type at most $-1$ if it is non-trivial. We then get a contradiction since we are considering the case when the type is 0. Thus none of $I_3, \dots, I_m$ is empty. Therefore we have
$$
\systeme*{\alpha + \beta = m-2, \alpha + 2\beta = 2m-5}
$$
whose solution is  $\alpha = 1$ and $\beta=m-3$. Therefore all commutators are in $J$ except possibly those where

$$|I_1|=2, |I_2|=3, |I_3|=1, |I_4|=\dots=|I_m|=2.$$

\noindent {\bf Commutators of type 1.}\label{3.1.7}
Since $|I_3|+ \dots |I_m| \leq 2m-4$,  we need $|I_1|+|I_2|=5$. Then all commutators are in $J$ except possibly those where
$$|I_1|=2, |I_2|=3, |I_3|=\dots=|I_m|=2.$$
\mbox{}\\
In view of the analysis above we come up with the following decomposition of $\Id_F(z)$ as a direct sum of sub spaces.  
$$
\Id_F(z)=\langle X \setminus Z \rangle \oplus \langle Z \rangle
$$
where $Z$ is the list of basis elements with the following partitions $(I_{1},\ldots ,I_{m})$
\begin{align*}
& [z, c_I], \text{ for } |I| \leq 3\\
    &|I_1|+|I_2|=3, |I_1| \leq 2, |I_3|= \dotsm =|I_m|=2 & (\zeta_1)  \\
    &|I_1|+|I_2|=4, |I_1| \leq 2, |I_3|=1, |I_4|= \dotsm =|I_m|=2 & (\zeta_2)\\
    &|I_1|=2, |I_2|=3, |I_3|=0, |I_4|= \dotsm =|I_m|=2 &(\zeta_3)\\
    &|I_1|=2, |I_2|=3, |I_3|=|I_4|=1, |I_5|= \dotsm =|I_m|=2 &(\zeta_4)\\
    &|I_1|=2, |I_2|=3, |I_3|=1, |I_4|= \dotsm =|I_m|=2 &(\xi_1)\\
    &|I_1|+|I_2|=4, |I_1| \leq 2, |I_3|= \dotsm =|I_m|=2 &(\xi_2)\\
    &|I_1|=2, |I_2|=3, |I_3|= \dotsm =|I_m|=2. &(\tau_1)
\end{align*}
We have seen above that $\langle X \setminus Z \rangle \subseteq J$. Now $J$ is the smallest subspace of $F$ containing $X\setminus Z$
that is invariant under taking Lie commutators with the $c_{j}$'s and $z$. In the following we go systematically through all possible scenarios where we need to add new elements to $\langle X\setminus Z\rangle$  in order to get a subspace $Z_{0}+\langle X\setminus Z\rangle $, where $Z_{0}\leq Z$, that is invariant under Lie commutators with the $c_{j}$'s. All the new elements will be relations  amongst the $\zeta$'s, $\xi$'s, and $\tau$'s. We will distinguish several cases considering the cardinality of the sets $I_1, \dots, I_m$. We will then later ensure furthermore that we obtain a subspace that is also invariant under taking Lie commutators with $z$.

\setcounter{equation}{0}
\subsection{}\label{3.2} {\bf Relations among $\zeta_1, \zeta_2, \zeta_3, \zeta_4$.}

\vspace{2mm}
\mbox{}\\
We now begin systematically adding elements to $\langle X\setminus Z\rangle$ to ensure that the resulting subspace is invariant under taking Lie commutator with $c_{k}$. For this we only need to consider elements of types $-2,-1$ and $0$. We start with a basis element $u$ from $X\setminus Z$ that is of type $-2$. We write $[u,c_{k}]$ as a linear combination of basis elements in $X$. If all the components are in $X\setminus Z$, then nothing needs to be added. We thus only need to consider the cases when some of the components are among $\zeta_{1},\zeta_{2},\zeta_{3}$ or $\zeta_{4}$. This means reducing the size of one of $I_{1}, \ldots ,I_{m}$ for some of $\zeta_{1},\ldots ,\zeta_{4}$ by one, to get a basis element $u$ of type $-2$ and then expand $[u,c_{k}]$ to get a linear combination of $\zeta_{1},\ldots ,\zeta_{4}$ modulo $J$. This may then give us  a new relator to add to $\langle X\setminus Z\rangle$. Below we exhaust all possibilities. \\ \\
\noindent {\bf Case A1: $|I_1|=2,|I_2|=3, |I_3|=0, |I_4|=1, |I_5|= \dotsm =|I_m|=2 $.}\\ \\
 Without loss of generality, we can assume that $I_1 \cup I_2 \cup \dotsm \cup I_m \cup \{k\} =\{1, \dotsc, 2m-1\}$. Recall that for the basis elements for $\mbox{Id}_{F}(z)$ we should have $1\in I_{2}$. For this reason we need to deal separately with the cases $k\geq 2$ and $k=1$. \\ \\
 Consider first the case when $k \geq 2$. Suppose $I_4=\{i\}$. Here as elsewhere in this section we will be calculating modulo $J$. We have 
\begin{eqnarray*}
0&=&[[z, c_{I_1}], [z, c_{I_2}], z, [z, c_i], [z, c_{I_5}], \dotsc, [z, c_{I_m}], c_k]\\
&=& \zeta_4(I_1, I_2, \{k\}, \{i\}, I_5, \dotsc, I_m) + \zeta_3(I_1, I_2, \varnothing, \{i,k\}, I_5, \dotsc, I_m).
\end{eqnarray*}
Therefore
\begin{equation}\tag{R1}\label{R1}
    \zeta_4(I_1, I_2, \{k\}, \{i\}, I_5, \dotsc, I_m)=-\zeta_3(I_1, I_2, \varnothing, \{i,k\}, I_5, \dotsc, I_m).
\end{equation}
This means that we need to add the relator $\zeta_4(I_1, I_2, \{k\}, \{i\}, I_5, \dotsc, I_m)+\zeta_3(I_1, I_2, \varnothing, \{i,k\}, I_5, \dotsc, I_m)$ to $\langle X\setminus Z\rangle$. \\ \\
The reader can convince himself that the case $k=1$ does not give us a new relation. \\ \\
\noindent {\bf Case A2: $|I_1|=1,|I_2|=3, |I_3|=0, |I_4|= \dotsm =|I_m|=2 $.} \\ \\
First consider the case when  $k \geq 2$ and suppose that $I_1=\{i\}$. We have

\begin{eqnarray*}
0&=&[[z, c_i], [z, c_{I_2}], z, [z, c_{I_4}], \dotsc, [z, c_{I_m}], c_k]\\
&=& [[z, c_{\{i,k\}}], [z, c_{I_2}], z, [z, c_{I_4}], \dotsc, [z, c_{I_m}]] \\
&+& [[z, c_i], [z, c_{I_2}], [z, c_k], [z, c_{I_4}], \dotsc, [z, c_{I_m}]]\\
&=&\zeta_3(\{i,k\}, I_2, \varnothing, I_4, \dotsc, I_m)+\zeta_2(\{i\}, I_2, \{k\}, I_4, \dotsc, I_m).
\end{eqnarray*}
Therefore, 
\begin{equation}\tag{R2}\label{R2}
\zeta_2(\{i\}, I_2, \{k\}, I_4, \dotsc, I_m)=-\zeta_3(\{i,k\}, I_2, \varnothing, I_4, \dotsc, I_m).
\end{equation}
Again one sees that for $k=1$, one does not get a new relation. \\ \\
\noindent {\bf Case A3: $|I_1|=|I_2|=2, |I_3|=0, |I_4|= \dotsm =|I_m|=2 $.} \\ \\
Consider the case when $k \geq 2$. We have
\begin{eqnarray*}
0&=& [[z, c_{I_1}], [z, c_{I_2}], z, [z, c_{I_4}], \dotsc, [z, c_{I_m}], c_k]\\
&=& [[z, c_{I_1}], [z, c_{{I_2}\cup \{k\}}], z, [z, c_{I_4}], \dotsc, [z, c_{I_m}]] \\
&+& [[z, c_{I_1}], [z, c_{I_2}], [z, c_k], [z, c_{I_4}], \dotsc, [z, c_{I_m}]]\\
&=&\zeta_3(I_1, I_2\cup\{k\}, \varnothing, I_4, \dotsc, I_m) + \zeta_2(I_1, I_2, \{k\}, I_4, \dotsc, I_m).
\end{eqnarray*}

Therefore, we get
\begin{equation}\tag{R3}\label{R3}
 \zeta_2(I_1, I_2, \{k\}, I_4, \dotsc, I_m)=-\zeta_3(I_1, I_2\cup\{k\}, \varnothing, I_4, \dotsc, I_m).
\end{equation}
We will handle $k=1$ later. \\ \\
\noindent {\bf Case A4: $|I_1|=2, |I_2|=1, |I_3|=1, |I_4|= \dotsm =|I_m|=2 $.} \\ \\
Let $I_3=\{i\}$. For the case $k \geq 2$ we get
\begin{eqnarray*}	
	0&=&[[z,c_{I_1}], [z,c_{\{1,k\}}], [z,c_i], [z,c_{I_4}], \dotsc, [z,c_{I_m}]]\\
	&+&[[z,c_{I_1}], [z,c_1], [z,c_{\{i,k\}}], [z,c_{I_4}], \dotsc, [z,c_{I_m}]]\\
	&=&\zeta_2(I_1, \{1,k\}, \{i\}, I_4, \dotsc, I_m)+\zeta_1(I_1, \{1\}, \{i,k\}, I_4, \dotsc, I_m)\\
	&=&-\zeta_3(I_1, \{1,i,k\}, \varnothing, I_4, \dotsc, I_m)+\zeta_1(I_1, \{1\}, \{i,k\}, I_4, \dotsc, I_m), 
\end{eqnarray*}
where the last equality follows from \ref{R3}. Therefore,
\begin{equation}\tag{R4}\label{R4}
    \zeta_1(I_1,  \{1\}, \{i,k\}, I_4, \dotsc, I_m)=\zeta_3(I_1, \{1,i,k\}, \varnothing, I_4, \dotsc, I_m).
\end{equation}
We will deal with the case $k=1$ later. \\ \\
\noindent {\bf Case A5: $|I_1|=1, |I_2|=2, |I_3|=1, |I_4|= \dotsm =|I_m|=2$.} \\ \\
Consider first the case $k\geq 2$. 
Let $I_1=\{i\}, I_3=\{j\}$ and $k \geq 2$. We have
\begin{eqnarray*}
	0&=&[[z,c_{\{i,k\}}], [z,c_{I_2}], [z,c_j], [z,c_{I_4}], \dotsc, [z,c_{I_m}]]\\
	&+&[[z,c_i], [z,c_{I_2 \cup \{k\}}], [z,c_j], [z,c_{I_4}], \dotsc, [z,c_{I_m}]]\\
	&+&[[z,c_i], [z,c_{I_2}], [z,c_{\{j,k\}}], [z,c_{I_4}], \dotsc, [z,c_{I_m}]]\\
	&=&\zeta_2(\{i,k\}, I_2, \{j\}, I_4, \dotsc, I_m)+\zeta_2(\{i\}, I_2 \cup \{k\}, \{j\}, I_4, \dotsc, I_m)\\
	&+&\zeta_1(\{i\}, I_2, \{j,k\}, I_4, \dotsc, I_m)\\
	&=&-\zeta_3(\{i,k\}, I_2 \cup \{j\}, \varnothing, I_4, \dotsc, I_m)-\zeta_3(\{i,j\}, I_2 \cup \{k\}, \varnothing, I_4, \dotsc, I_m)\\
	&+&\zeta_1(\{i\},I_2, \{j,k\}, I_4, \dotsc, I_m), 
\end{eqnarray*}
where the last equality follows from R2 and \ref{R3}. Therefore,
\begin{align}\tag{R5}\label{R5}
    \zeta_1(\{i\},  I_2, \{j,k\}, I_4, \dotsc, I_m)&= \zeta_3(\{i,k\}, I_2 \cup \{j\}, \varnothing, I_4, \dotsc, I_m)\\
    \nonumber &+ \zeta_3(\{i,j\}, I_2 \cup \{k\}, \varnothing, I_4, \dotsc, I_m).
\end{align}
We deal with the case $k=1$ later. \\ \\
\noindent {\bf Case A6: $|I_1|=0, |I_2|=3, |I_3|=1, |I_4|= \dotsm =|I_m|=2 $.} \\ \\
We first look at the case $k\geq 2$. 
Let $|I_3|=\{i\}$. We have 

\begin{eqnarray*}	
	0&=&[[z,c_k], [z, c_{I_2}], [z,c_i], [z,c_{I_4}], \dotsc, [z,c_{I_m}]]\\
	&+&[z, [z, c_{I_2}], [z,c_{\{i,k\}}], [z,c_{I_4}], \dotsc, [z,c_{I_m}]]\\
	&=&\zeta_2(\{k\}, I_2, \{i\}, I_4, \dotsc,I_m)+\zeta_1(\varnothing, I_2, \{i,k\}, I_4, \dotsc, I_m)\\
	&=&-\zeta_3(\{k,i\}, I_2, \varnothing, I_4, \dotsc, I_m)+\zeta_1(\varnothing, I_2, \{i,k\}, I_4, \dotsc, I_m).
\end{eqnarray*}
Therefore,
\begin{equation}\tag{R6}\label{R6}
    \zeta_1(\varnothing, I_2, \{i,k\}, I_4, \dotsc, I_m)=\zeta_3(\{i,k\}, I_2, \varnothing, I_4, \dotsc, I_m).
\end{equation}
Again the case $k=1$ does not add anything new. \\ \\
Notice that it follows from the relations above that modulo $J$ we can write the basis elements of types $\zeta_1, \zeta_2$ and $\zeta_4$ as a linear combination of basis elements of type $\zeta_3$. Notice also that from (R6) and the fact that $\mbox{Id}_{F}(z)$ is metabelian, it follows that 
\begin{eqnarray*}
\zeta_3(I_1,I_2, \varnothing, I_4, I_5, \dotsc , I_m) & = & \zeta_1(\varnothing, I_2,I_1,I_4, I_5, \dotsc, I_m) \\
\mbox{} & = & \zeta_1(\varnothing, I_2,I_4,I_1,I_5, \dotsc, I_m) \\
 \mbox{} & = & \zeta_3(I_4,I_2,\varnothing ,I_1, I_5, \dotsc ,I_m).
\end{eqnarray*}
Therefore we get

\begin{align}\tag{R7}\label{R7}
    \zeta_3(I_1,I_2,\varnothing, I_4, I_5, \dotsc, I_m) = & \zeta_3(I_4, I_2, \varnothing, I_1, I_5, \dotsc, I_m).
\end{align}

\noindent {\bf Case A7: $|I_1|=|I_2|=1, |I_3|= \dotsm =|I_m|=2 $.} \\ \\
Let $I_1=\{i\}, I_3=\{j,r\}$ and consider the case where $k \geq 2$. We have
\begin{eqnarray*}
	0&=&[[z,c_{\{i,k\}}], [z,c_1], [z,c_{\{j,r\}}], [z,c_{I_4}], \dotsc, [z,c_{I_m}]]\\
	&+&[[z,c_i], [z,c_{\{1,k\}}], [z,c_{\{j,r\}}], [z,c_{I_4}], \dotsc, [z,c_{I_m}]]\\
	&=&\zeta_1(\{i,k\}, \{1\}, \{j,r\}, I_4, \dotsc, I_m)+\zeta_1(\{i\}, \{1,k\}, \{j,r\}, I_4, \dotsc, I_m\\
	&=&\zeta_3(\{i,k\}, \{1,j,r\}, \varnothing, I_4, \dotsc, I_m)+\zeta_3(\{i,j\}, \{1,k,r\}, \varnothing, I_4, \dotsc, I_m)\\
	&+&\zeta_3(\{i,r\}, \{1,j,k\}, \varnothing, I_4, \dotsc, I_m), \text{ (using \ref{R4} and \ref{R5})}.
\end{eqnarray*}

Therefore,
\begin{align}\tag{R8}\label{R8}
    \zeta_3(\{i,k\},  \{1,j,r\}, \varnothing, I_4, \dotsc, I_m)=& -\zeta_3(\{i,j\}, \{1,k,r\}, \varnothing, I_4, \dotsc, I_m)\\
    \nonumber & - \zeta_3(\{i,r\}, \{1,j,k\}, \varnothing, I_4, \dotsc, I_m).
\end{align}
For $k=1$ one can check that there is not a new relation. \\ \\
\noindent {\bf Case A8: $|I_1|=0, |I_2|=2, |I_3|= \dotsm =|I_m|=2 $.} \\ \\
Let $I_2=\{1,i\}, I_3=\{j,r\}$ and $k \geq 2$. We have
\begin{eqnarray*}
	0&=&[z, [z,c_{\{1,i,k\}}], [z,c_{I_3}], \dotsc, [z,c_m]]+[[z,c_k], [z,c_{\{1,i\}}], [z,c_{I_3}], \dotsc, [z,c_{I_m}]]\\
	&=&\zeta_1(\varnothing, \{1,i,k\}, I_3, \dotsc, I_m)+\zeta_1(\{k\}, \{1,i\}, I_3, \dotsc, I_m)\\
	&=&\zeta_3(\{j,r\}, \{1,i,k\}, \varnothing, I_4, \dotsc, I_m)+\zeta_3(\{k,j\}, \{1,i,r\},  \varnothing, I_4, \dotsc, I_m)\\
	&+&\zeta_3(\{k,r\}, \{1,i,j\},  \varnothing, I_4, \dotsc, I_m).
\end{eqnarray*}
Modulo relation \ref{R8}, we get
\begin{equation}\tag{R9}\label{R9}
    \zeta_3(\{j,r\},  \{1,i,k\}, \varnothing, I_4, \dotsc, I_m)=\zeta_3(\{i,k\}, \{1,j,r\}, \varnothing, I_4, \dotsc, I_m).
\end{equation}
The reader can check that here fas well as for A3, A4  and A5 with $k=1$ one does not obtain any new relations.  The same is true for the following remaining cases. \\ \\
\noindent {\bf Case A9. $|I_1|=|I_2|=2, |I_3|=|I_4|=1, |I_5|= \dotsm =|I_m|=2 $.}\label{case2.2.6} \\
\noindent{\bf Case A10: $|I_1|=2,|I_2|=3, |I_3|=|I_4|=|I_5|=1, |I_6|= \dotsm =|I_m|=2 $.} \\ 
\noindent {\bf Case A11: $|I_1|=1,|I_2|=3, |I_3|=|I_4|=1, |I_5|= \dotsm =|I_m|=2 $.} \\
\noindent {\bf Case A12: $|I_{1}|=2,|I_{2}|=0,|I_{3}|=\dotsm =|I_{m}|=2$.} \\ \\
To summarize we get the following set of equivalent relations for elements of type $-1$ modulo $J$.

\begin{align*}
    & \zeta_4(I_1, I_2, \{i\}, \{j\}, I_5, \dotsc, I_m)=-\zeta_3(I_1, I_2, \varnothing, \{i,j\}, I_5, \dotsc, I_m) &(E1) \\
    &\zeta_2(I_1, I_2, \{i\}, I_4, \dotsc, I_m)=-\zeta_3(I_1, I_2 \cup \{i\}, \varnothing, I_4, \dotsc, I_m) \mbox{ }(|I_{1}|=|I_{2}|=2) &(E2)\\
    &\zeta_2(\{i\}, I_2, \{j\}, I_4, \dotsc, I_m)=-\zeta_3(\{i,j\}, I_2, \varnothing, I_4, \dotsc, I_m) &(E3)\\
    &\zeta_1(\varnothing, I_2, I_3, I_4, \dotsc, I_m)=\zeta_3(I_3, I_2, \varnothing, I_4, \dotsc, I_m) &(E4)\\
    &\zeta_1(I_1, \{1\}, \{i,j\}, I_4, \dotsc, I_m)=\zeta_3(I_1, \{1,i,j\}, \varnothing, I_4, \dotsc, I_m) &(E5)\\
    &\zeta_1(\{i\}, I_2, \{j,k\}, I_4, \dotsc, I_m)=\zeta_3(\{i,j\}, I_2 \cup \{k\}, \varnothing, I_4, \dotsc, I_m)\\
    &\nonumber +\zeta_3(\{i,k\}, I_2 \cup \{j\}, \varnothing, I_4, \dotsc, I_m) &(E6)\\
    &\zeta_3(I_1, I_2, \varnothing, I_4, I_5, \dotsc, I_m)=\zeta_3(I_4, I_2, \varnothing, I_1, I_5, \dotsc, I_m) &(E7)\\
    &\zeta_3(I_1, I_2 \cup \{1\}, \varnothing, I_4, \dotsc, I_m)=\zeta_3(I_2, I_1\cup \{1\}, \varnothing, I_4, \dotsc, I_m) &(E8)\\
    &\zeta_3(\{i,k\}, \{1,j,r\}, \varnothing, I_4, \dotsc, I_m)+\zeta_3(\{i,j\}, \{1,k,r\}, \varnothing, I_4, \dotsc, I_m)\\
    &+\nonumber \zeta_3(\{i,r\}, \{1,j,k\}, \varnothing, I_4, \dotsc, I_m)=0. &(E9)
\end{align*}
Notice that relations E1-E6 tell us that, modulo $J$, all the basis elements of type $-1$ in $X$ can be written as linear combinations of elements of type $\zeta_{3}$. Then E7 and E8 imply that 
                $$\zeta_3(I_{1},I_{2},I_{4},\ldots ,I_{m})=\zeta_{3}(I_{1},I_{2}\cup \{1\},\varnothing,I_{4},\ldots ,I_{m})$$
is symmetric in $I_{1},I_{2},I_{4},\ldots ,I_{m}$. Apart from this we have one extra relation E9.

\subsection{Relations among $\xi_1, \xi_2$.}\label{3.3}
\mbox{}\\ \\
We continue the analysis and deal here with $[u,c_{k}]$ were $u$ is of type $-1$. On readily sees that if $u\in X\setminus Z$, then $[u,c_{k}]\in \langle X\setminus Z\rangle$ and no new relators needed. We therefore only need to consider $u$ where $u$ is one of the extra relators that we obtained in Section 3.2. \\ \\
\noindent {\bf B1. Consequences of E1.}
For $k \geq 2$ we have
$$
    [\zeta_4(I_1, I_2, \{i\}, \{j\}, I_5, \dotsc, I_m), c_k]=-[\zeta_3(I_1, I_2, \varnothing, \{i,j\}, I_5, \dotsc, I_m), c_k]
$$
which implies that
\begin{align*}
   \xi_2(I_1, I_2, \{j\}, \{i,k\}, I_5, \dotsc, I_m)=&- \xi_2(I_1, I_2, \{i\}, \{j,k\}, I_5, \dotsc, I_m)\\
    &-\xi_2(I_1, I_2, \{k\}, \{i,j\}, I_5, \dotsc, I_m).
\end{align*}

Therefore,
\begin{align}\tag{F1}\label{F1}
    \xi_2(I_1, I_2, \{j\}, \{i,k\}, I_5, \dotsc, I_m)=&- \xi_2(I_1, I_2, \{i\}, \{j,k\}, I_5, \dotsc, I_m)\\
    &\nonumber -\xi_2(I_1, I_2, \{k\}, \{i,j\}, I_5, \dotsc, I_m).
\end{align}
For $k=1$ one does not get a new relation. \\ \\
\noindent {\bf B2. Consequences of E8.}
For the case $k \geq 2$ we get
$$[\zeta_3(I_1,I_2 \cup \{1\}, \varnothing, I_4, \dotsc, I_m), c_k]=[\zeta_3(I_2, I_1,\{1\}, \varnothing, I_4, \dotsc, I_m), c_k].$$

Therefore,
\begin{equation}\tag{F2}\label{F2}
    \xi_2(I_1, I_2 \cup \{1\}, \{k\}, I_4, \dotsc, I_m)=\xi_2(I_2, I_1 \cup \{1\}, \{k\}, I_4, \dotsc, I_m).
\end{equation}
Again for $k=1$ we do not get any new relation. \\ \\
\noindent {\bf B3. Consequences of E7.}
For the case $k \geq 2$ we get
$$[\zeta_3(I_1,I_2, \varnothing, I_4,I_5, \dotsc, I_m), c_k]=[\zeta_3(I_4, I_2, \varnothing, I_1,I_5, \dotsc, I_m), c_k].$$

Therefore,
\begin{equation}\tag{F3}\label{F3}
    \xi_2(I_1, I_2, \{k\}, I_4, I_5, \dotsc, I_m)=\xi_2(I_4, I_2, \{k\}, I_1, I_5, \dotsc, I_m).
\end{equation}
For $k=1$ we do not get anything new. \\ \\
\noindent {\bf B4. Consequences of E9.}
Commuting $E_9$ with $a_s$, for the case $s \geq 2$ we get 
\begin{align}\tag{F4}\label{F4}
   \xi_2(\{i,k\}, \{1,j,r\}, \{s\}, I_4, \dotsc, I_m)=&-\xi_2(\{i,j\}, \{1,k,r\}, \{s\}, I_4, \dotsc, I_m)\\
   &\nonumber -\xi_2(\{i,r\}, \{1,j,k\}, \{s\}, I_4, \dotsc, I_m).
\end{align}
For $k=1$ we get nothing new.  \\ \\
\noindent {\bf B5. Consequences of E4.}
For the case $k \geq 2$ we get
$$
[\zeta_1(\varnothing,I_2, I_3, I_4, \dotsc, I_m), c_k]=[\zeta_3(I_3, I_2, \{k\}, I_4, \dotsc, I_m), c_k].
$$
As a consequence we obtain that
$$  
\xi_2(\{k\}, I_2, I_3,I_4, \dotsc, I_m)=\xi_2(I_3, I_2, \{k\}, I_4, \dotsc, I_m).
$$
Therefore,
\begin{equation}\tag{F5}\label{F5}
    \xi_1(\{i\}, I_2, \{j,k\}, I_4, \dotsc, I_m)=\xi_2(\{j,k\}, I_2, \{i\}, I_4, \dotsc, I_m).
\end{equation}
Again there is no relation for $k=1$. \\ \\
\noindent {\bf B6. Consequences of E5.}
For the case $k\geq 2$ we have
$$
    [\zeta_1(I_1, \{1\},\{i,j\}, I_4, \dotsc, I_m), c_k]=[\zeta_3(I_1, \{1,i,j\}, \varnothing, I_4, \dotsc, I_m), c_k].
$$
Then
$$
\xi_1(I_1, \{1,k\}, \{i,j\}, I_4, \dotsc, I_m)=\xi_2(I_1, \{1,i,j\}, \{k\}, I_4, \dotsc, I_m).
$$

Therefore,
\begin{equation}\tag{F6}\label{F6}
    \xi_1(I_1, \{1,k\}, \{i,j\}, I_4, \dotsc, I_m)=\xi_2(I_1, \{1,i,j\}, \{k\}, I_4, \dotsc, I_m).
\end{equation}
For $k=1$ we do not get anything new and the same is true when consider consequences of E2,E3 and E6. From this analysis
we get the following set of equivalent relations. 
\begin{align*}
&\xi_1(\{i\}, I_2, \{j,k\}, I_4, \dotsc, I_m)=\xi_2(\{j,k\}, I_2, \{i\}, I_4, \dotsc, I_m) &(G1)\\
& \xi_1(I_1, \{1,k\}, \{i,j\}, I_4, \dotsc, I_m)=\xi_2(I_1, \{1,i,j\}, \{k\}, I_4, \dotsc, I_m) &(G2)\\
& \xi_2(I_1, I_2 \cup \{1\}, \{k\}, I_4, \dotsc, I_m)=\xi_2(I_2, I_1 \cup \{1\}, \{k\}, I_4, \dotsc, I_m) &(G3)\\
&  \xi_2(I_1, I_2, \{k\}, I_4, I_5, \dotsc, I_m)=\xi_2(I_4, I_2, \{k\}, I_1, I_5, \dotsc, I_m) &(G4)\\
    &\xi_2(\{i,k\}, \{1,j,r\}, \{s\}, I_4, \dotsc, I_m)+\xi_2(\{i,j\}, \{1,k,r\}, \{s\}, I_4, \dotsc, I_m)\\
    &+\xi_2(\{i,r\}, \{1,j,k\}, \{s\}, I_4, \dotsc, I_m)=0 &(G5)\\
    &\xi_2(I_1, \{1,i,k\}, \{j\}, I_4, \dotsc, I_m)+\xi_2(I_1, \{1,k,j\}, \{i\}, I_4, \dotsc, I_m)\\
    &+\xi_2(I_1, \{1,i,j\}, \{k\}, I_4, \dotsc, I_m)=0. &(G6)
\end{align*}

\subsection{Relations in $\tau_1$.}\label{3.4}
\mbox{}\\ \\
Here we deal with $[u,c_{k}]$ where $u$ is of type $0$. Again we only get a new relation when $u$ is one of the relators we
obtained in Section 3.3. \\ \\
\noindent {\bf C1. Consequences of G3.}
For $k \geq 2$ we have 
$$
[\xi_2(I_1, I_2 \cup \{1\}, \{i\}, I_4, \dotsc, I_m), c_k]=[\xi_2(I_2, I_1 \cup \{1\}, \{i\},I_4, \dotsc, I_m), c_k].
$$
which gives
$$\tau_1(I_1, I_2 \cup \{1\}, \{i,k\}, I_4, \dotsc, I_m)=\tau_1(I_2, I_1 \cup \{1\}, \{i,k\}, I_4, \dotsc, I_m).
$$
Therefore, we get
\begin{equation}\tag{H1}\label{H1}
    \tau_1(I_1, I_2 \cup \{1\}, I_3, \dotsc, I_m)=\tau_1(I_2, I_1 \cup \{1\}, I_3, \dotsc, I_m).
\end{equation}
For $k=1$ we get nothing new.  \\ \\
\noindent {\bf C2. Consequences of G4.}
For $k \geq 2$ we have 
\begin{equation}\tag{H2}\label{H2}
    \tau_1(I_1, I_2, I_3, I_4, \dotsc, I_m)=\tau_1(I_4, I_2, I_3, I_1, \dotsc, I_m).
\end{equation}
For $k=1$ we get nothing new. \\ \\
\noindent {\bf C3. Consequences of G5.}
For $k \geq 2$ we have 
\begin{align}\tag{H3}\label{H3}
    \tau_1(\{i,k\}, \{1,j,r\}, I_3, \dotsc, I_m)=&-\tau_1(\{i,j\}, \{1,k,r\}, I_3, \dotsc, I_m)\\
    &\nonumber-\tau_1(\{i,r\}, \{1,k,j\}, I_3, \dotsc, I_m).
\end{align}
Again for $k=1$ we get nothing new and the same is true for the consequence of G1, G2 and G6. 
Thus the only extra relators we get are H1, H2 and H3. We will now see that if we add to $\langle X\setminus Z\rangle$ the subspace generated by the extra relators we have obtained in Sections 3.2, 3.3 and 3.4, then we get an ideal in $F$. 
\begin{lem}
The subspace generated by $X\setminus Z$ and the relators given in E1-E9, G1-G6 and H1-H3 is an ideal in $F$ and thus equal to $J$.
\end{lem}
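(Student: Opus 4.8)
The plan is to prove the two inclusions $W \subseteq J$ and $J \subseteq W$ separately, where $W$ denotes the subspace spanned by $X \setminus Z$ together with the relators E1--E9, G1--G6, H1--H3. The inclusion $W \subseteq J$ is immediate: $X \setminus Z \subseteq J$ was established in Section 3.1, and every relator lies in $J$ because it arises by applying some $\ad(c_k)$ to an element already known to be in $J$ (the elements of $X\setminus Z$ for E, then the E's for G, then the G's for H), and $J$ is an ideal. For the reverse inclusion I would invoke the defining minimality of $J$: since $J$ is the smallest ideal of $F$ containing $X \setminus Z$ and $W$ already contains $X\setminus Z$, it suffices to prove that $W$ is an ideal; then $J \subseteq W$, and with the first inclusion this gives $W=J$. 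As $F = \langle z, c_1, c_2, \ldots \rangle$ and $W \subseteq \Id_F(z)$, being an ideal reduces to the two closures $[W, c_k] \subseteq W$ for all $k$ and $[W, z] \subseteq W$.

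First I would record that $[W, c_k] \subseteq W$ is exactly what Sections 3.2--3.4 verify. The spanning elements of $W$ are the basis elements of $X \setminus Z$ and the relators. For $u \in X \setminus Z$, expanding $[u, c_k]$ in the basis $X$ gives components of type $t(u)+1$, which can lie in $Z$ only when $t(u)\in\{-2,-1,0\}$; the exhaustive list of shapes A1--A12 shows that for $t(u)=-2$ the $\zeta$-components assemble into E1--E9, while for $t(u)\in\{-1,0\}$ no $Z$-components occur at all, so $[u,c_k]\in W$. For the relators, commuting E1--E9 with $c_k$ yields (B1--B6) the relations G1--G6; commuting G1--G6 yields (C1--C3) the relations H1--H3; and commuting H1--H3 produces only type-$2$ components, all in $X\setminus Z$ by condition (3). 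The crucial structural point is that this generation \emph{terminates}, so no relator beyond E, G, H is forced; the $k=1$ subcases must also be checked to give nothing new, as noted throughout those sections.

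The genuinely new content is $[W, z] \subseteq W$, and here I would exploit that, since $\Id_F(z)$ is metabelian, commuting a basis element $[[z, c_{I_1}], \ldots, [z, c_{I_m}]]$ with $z$ simply appends an empty slot: for $m\ge 2$ it equals the basis element with partition $(I_1, \ldots, I_m, \varnothing)$ (and for $m=1$ the appended $z$ occupies the leading slot, handled identically). Two easy observations then dispose of most cases. By the type rule, $[E_i, z]$ and $[G_i, z]$ have types $-3$ and $-2$, so all their components lie in $X \setminus Z$ by condition (3). Next, appending an empty slot to any element of $X \setminus Z$ again lands in $X \setminus Z$: the result carries an empty slot, the only surviving shape in $Z$ with an empty slot is $\zeta_3$ (of type $-1$), and producing a $\zeta_3$ would force the original to be $\tau_1$-shaped, contradicting membership in $X\setminus Z$. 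The one substantive point is $[H_i, z]$: appending an empty slot to $\tau_1$ produces \emph{precisely} a $\zeta_3$, so that $[H1, z], [H2, z], [H3, z]$ become $\zeta_3$-relations, and one checks that after relabelling and using the symmetry of $\zeta_3$ recorded in E7--E8 they coincide with E8, E7 and E9 respectively. Hence $[W, z]\subseteq W$.

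Combining the two closures shows $W$ is an ideal, and with the minimality argument this yields $W = J$. The main obstacle is not any single computation but \emph{completeness}: guaranteeing that the shape-by-shape bookkeeping A1--A12, B1--B6, C1--C3 genuinely exhausts every way a spanning element acquires a $Z$-component under $\ad(c_k)$, and that the relator generation halts at type $1$. The feature that makes the $z$-direction tractable is that $\ad(z)$ merely appends an empty slot, so the only nontrivial $z$-closure, namely $[H_i, z]$, falls back exactly onto the already-derived relations E7--E9; verifying this matching carefully, rather than proving any hard new identity, is where the real care is needed.
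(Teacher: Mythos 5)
Your proposal is correct and takes essentially the same route as the paper's own proof: minimality of $J$ reduces everything to showing the span is an ideal, invariance under the $c_k$'s is exactly what Sections 3.2--3.4 established, and the type argument reduces $z$-invariance to commuting H1--H3 with $z$, which reproduce E8, E7 and E9 respectively. The only difference is cosmetic: you spell out the two inclusions and the empty-slot mechanism more explicitly than the paper does.
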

\begin{proof}
Let $V$ be this subspace. We found the relations E1-E9, G1-G6 and H1-H3 by systematically ensuring that we obtained a subspace that is invariant under taking Lie commutators with the $c_{j}$'s. Notice that if we take a basis element $u=[[c,c_{i_{1}}],[z,c_{i_{2}}],\ldots ,[z,c_{I_{m}}]]$ from $X$ where $[u,z]\not\in V$ then we must have $|I_{1}|,|I_{3}|,|I_{4}|,\ldots ,|I_{m}|\leq 2$ and $|I_{2}|\leq 3$. Thus $u$ has type at most $1$. On the other hand if the type of $u$ is less than $1$, then $[u,z]$ has type less than $-1$ and is thus in $V$. This means that we only need to consider $u\in V$ that is linear combination of elements of type $\tau_{1}$. 
In other words we have to  commute each relation H1, H2, H3 with $z$ and check that the resulting element is in $V$. 
If we commute relation H1 with $z$, we get
$$
[\tau_1(I_1, I_2 \cup \{1\}, I_3, \dotsc, I_m),z]=[\tau_1(I_2, I_1 \cup \{1\}, I_3, \dotsc, I_m),z]
$$
which implies that
$$\zeta_3(I_1, I_2 \cup \{1\}, \varnothing, I_4, \dotsc, I_m)=\zeta_3(I_2, I_1 \cup \{1\}, \varnothing, I_4, \dotsc, I_m),
$$
which is relation E8 from Section \ref{3.2}. Using similar arguments we commute relation H2 with $z$, in which case we have
$$[\tau_1(I_1, I_2, I_3, I_4, \dotsc, I_m),z]=[\tau_1(I_4, I_2, I_3, I_1, \dotsc, I_m), z]$$
which implies 
$$\zeta_3(I_1, I_2, \varnothing, I_4, I_5, \dotsc, I_m)=\zeta_3(I_4, I_2, \varnothing, I_1, I_5, \dotsc, I_m)$$
which is relation E7 from Section \ref{3.2}. Lastly, commuting relation H3 with $z$ gives
\begin{eqnarray*}
    [\tau_1(\{i,k\}, \{1,j,r\}, I_3, \dotsc, I_m),z]&=&-[\tau_1(\{i,j\}, \{1,k,r\}, I_3, \dotsc, I_m),z]\\
    &&-[\tau_1(\{i,r\}, \{1,k,j\}, I_3, \dotsc, I_m),z]
\end{eqnarray*}
which implies 
\begin{eqnarray*}
    \zeta_3(\{i,k\}, \{1,j,r\}, \varnothing, I_4, \dotsc, I_m)&=&-\zeta_3(\{i,j\}, \{1,k,r\}, \varnothing, I_4, \dotsc, I_m)\\
    &&- \zeta_3(\{i,r\}, \{1,j,k\}, \varnothing, I_4, \dotsc, I_m)
\end{eqnarray*}
which is relation E9. This completes the proof. 

%
\end{proof}

\section{The ideal $\Id_L(z)$}
\mbox{}\\
Let $W=W(m,1,\overset{2m+1}{\dots},1)$  be the subspace of $F$ generated by all 
$$
E(I_1, \dots, I_m)=[[z,c_{I_1}], [z,c_{\{1\} \cup I_2}], [z,c_{I_3}], \dots,  [z,c_{I_m}]]
$$
with $I_1 \cup \dots \cup I_m=\{2, \dots, 2m+1\}$ and $|I_1|=\dots=|I_m|=2$. We know that these elements are linearly independent and we have also seen that 
$$
N=N(m, 1,\overset{2m+1}{\dots}, 1)=W(m, 1, \overset{2m+1}{\dots}, 1) \cap J
$$
is generated by two sets of relations
\begin{align}
\tag{$\mathcal{R}_1$}\label{R1}
 E(I_{\sigma(1)}, \dots, I_{\sigma(m)})&-E(I_1,\dots,I_m), \mbox{ where $\sigma \in \Sym \{1, \dotsc, m\}$ }\\ 
\tag{$\mathcal{R}_2$} \label{R2} E(\{i_1,i_2\},\{j_1,j_2\},I_3,\dots,I_m)=&-E(\{i_1,j_1\},\{i_2,j_2\},I_3,\dots,I_m)\\
\nonumber
&-E(\{i_1,j_2\},\{i_2,j_1\},I_3,\dots,I_m).
\end{align}
Our aim is to show that $\frac{W}{N} \neq 0$, for all $m \geq 1$. Then it will follow that $L=\frac{F}{J}$ has multihomogenous elements of arbitrary weight $m$ in $z$ that are non-zero. As a consequence, we see that $\Id_L(z)$ is not nilpotent. \\ \\
From now on, we work modulo (\ref{R1}) and thus the order of $I_1, \dots, I_m$ does not matter for the value of $E(I_1, \dots, I_m)$. From now on we write $E(\{I_1, \dots, I_m\})$ for $E(I_1, \dots, I_m)$ modulo (\ref{R1}). In this way we get an element for each partition $\{I_1, \dots, I_m\}$ and these are a basis for $W$ modulo (\ref{R1}). \\ \\
{\bf Definition}.  We say that a basis element $E(\{I_1, \dots, I_m\})$ has \textit{norm} $k$ if there are exactly $k$ of the $I_j$'s, where $I_j \subseteq \{2, \dots, m+1\}$. Notice that we then also have exactly $k$ of the $I_j$'s, where $I_j \subseteq \{m+2, \dots, 2m+1\}$. \\ \\
Suppose that $k \geq 1$. We can assume $I_1, I_3, \dots,  I_{2k-1} \subseteq \{2, \dots, m+1\}$ and $I_2, I_4, \dots,  I_{2k} \subseteq \{m+2, \dots, 2m+1\}$. If $I_1=\{i_1, i_2\}$ and $I_2=\{j_1, j_2\}$, then modulo ($\mathcal{R}_2$) we have
\begin{align*}
E(\{\{i_1,i_2\},\{j_1,j_2\},I_3,\dots,I_m\})=&-E(\{\{i_1,j_1\},\{i_2,j_2\},I_3,\dots,I_m\})\\
&-E(\{\{i_1,j_2\},\{i_2,j_1\},I_3,\dots,I_m\}).  
\end{align*}
Notice that the terms on the right hand side have norm $k-1$. We will refer to this as a 1-step decomposition of $E$ through the pair $(I_1, I_2)$ and we denote it $\mathcal{D}_{\{(I_1,I_2)\}}(E(\{I_1, \dots, I_m\}))$. 
Suppose now $k\geq 2$. We can continue and apply the 1-step decomposition to the two terms through the pair $(I_3, I_4)$ to get a 2-step decomposition of $E(I_1, \dots, I_m)$ through $\{(I_1, I_2),(I_3,I_4)\}$ that we denote 
$$
\mathcal{D}_{\{(I_1,I_2),(I_3,I_4)\}}(E(\{I_1, \dots, I_m\})).
$$
We can continue in this manner through the pairs $(I_1, I_2), \dots, (I_{2k-1}, I_{2k})$ to get a decomposition into $2^k$ terms of norm 0, denoted 
$$
\mathcal{D}_{\{(I_1,I_2),\dots, (I_{2k-1},I_{2k})\}}(E(\{I_1, \dots, I_m\})).
$$
{\bf Remark}. To say that an element $E(\{I_{1},I_{2},\ldots ,I_{m}\})$ has norm $0$ is to say that $\{I_{1},\ldots ,I_{m}\}=\{(2,\sigma(m+2)),(3,\sigma(m+3)),\ldots ,(m+1,\sigma(2m+1))\}$ for some permutation of $\sigma$ of $m+2,\ldots ,2m+1$. We have seen that, modulo ($\mathcal{R}_{1}$) and ($\mathcal{R}_{2}$), $W$ is generated by basis elements of norm $0$. Let $W_{0}$ be the subspace of $W$ generated by these. Thus $W/W\cap J\cong (W_{0}+J)/J\cong W_{0}/W_{0}\cap J$.    \\ \\
Notice that the decomposition $\mathcal{D}_{\{(I_1,I_2),\dots, (I_{2k-1},I_{2k})\}}(E(\{I_1, \dots, I_m\}))$, does not depend on the order of the pairs in $\{(I_1,I_2),\dots, (I_{2k-1},I_{2k})\}$, but it may depend on what pairings are chosen. For each $E(\{I_1, \dots, I_m\})$ of norm  $k \geq 2$, pick one of the decompositions according to some pairing and denote this $E^0(\{I_1, \dots, I_m\})$. Thus for every element $E(\{I_1, \dots, I_m\})$ of norm $k \geq 2$, there is a defining relation
$$
E(\{I_1, \dots, I_m\})=E^0(\{I_1, \dots, I_m\})
$$
modulo (\ref{R2}), where $E^0(\{I_1, \dots, I_m\})$ is a linear combination of terms of norm 0. Using these defining relations, all relations in (\ref{R2}) become relations in $W_0$, the subspace of $W$ generated by $E(\{I_1, \dots, I_m\})$ of norm  0. We next want to understand what these relations in $W_0$ are. Let $(\mathcal{R}_2)_2$ be the collections of all relations in (\ref{R2}) where none of the $I_5, \dots, I_m$ is contained in $\{2, \dots, m+1\}$ or $\{m+2, \dots, 2m+1\}$. Next lemma simplifies our task.

\begin{lem}\label{relationmoduloR2}
Let $E=E(\{I_1, \dots, I_m\})$ be a basis element of norm  $k$ where $I_1, I_3, \dots,  I_{2k-1} \subseteq \{2, \dots, m+1\}$ and $I_2, I_4, \dots,  I_{2k} \subseteq \{m+2, \dots, 2m+1\}$ modulo  $(\mathcal{R}_2)_2$. We have
$$
\mathcal{D}_{\{(I_1,I_2),\dots, (I_{2k-1},I_{2k})\}}(E)=\mathcal{D}_{\{(I_1,I_{\sigma(2)}),\dots, (I_{2k-1},I_{\sigma(2k)})\}}(E)
$$
for any $\sigma \in \Sym\{2,4,\dots, 2k\}$.
\end{lem}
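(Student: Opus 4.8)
Let me understand what Lemma 4.1 claims. We have a norm-$k$ basis element $E = E(\{I_1, \dots, I_m\})$ where the pairs $(I_1,I_2), (I_3,I_4), \dots, (I_{2k-1},I_{2k})$ consist of a set $I_{2j-1} \subseteq \{2,\dots,m+1\}$ (the "left" half) and $I_{2j} \subseteq \{m+2,\dots,2m+1\}$ (the "right" half). The full $k$-step decomposition uses these pairings. The lemma says that modulo $(\mathcal{R}_2)_2$, we can permute the right-halves among themselves — i.e., re-pair so that $I_{2j-1}$ is now paired with $I_{\sigma(2j)}$ for $\sigma \in \Sym\{2,4,\dots,2k\}$ — and get the same decomposition into norm-0 terms.

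**What's the content here?** The $\mathcal{R}_2$ relation is
$$E(\{i_1,i_2\},\{j_1,j_2\},\dots) = -E(\{i_1,j_1\},\{i_2,j_2\},\dots) - E(\{i_1,j_2\},\{i_2,j_1\},\dots).$$
A 1-step decomposition through $(I_1,I_2)$ where $I_1=\{i_1,i_2\}$, $I_2=\{j_1,j_2\}$ replaces one norm-$k$ element with two norm-$(k-1)$ elements. The key point is that each 1-step decomposition through pair $(I_{2j-1}, I_{2j})$ "scrambles" one left-half pair $\{i_1,i_2\}$ against one right-half pair $\{j_1,j_2\}$, producing the two cross-pairings. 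The claim is that the *result* of doing all $k$ of these decompositions doesn't change if we first permute which right-half goes with which left-half.

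**How I would prove it.** The natural approach is induction on $k$. For $k=1$ there's a single pair, $\Sym\{2\}$ is trivial, and there's nothing to prove. For the inductive step, the crucial observation is that the 1-step decompositions through *different* pairs $(I_{2j-1}, I_{2j})$ act on *disjoint* sets of coordinates — pair $j$ touches only the elements of $I_{2j-1} \cup I_{2j}$. Because these decompositions operate on disjoint coordinate blocks, they commute with one another as operators on $W_0$, and more importantly the result is a kind of tensor/product structure over the $k$ blocks. Concretely, the full decomposition $\mathcal{D}$ should factor as a sum over choices — in each block $j$ we pick one of the two cross-pairings of $\{i_1,i_2\}$ with $\{j_1,j_2\}$ — and the output norm-0 element is determined coordinate-block-by-coordinate-block. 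So I would first prove a product/commuting lemma: that decomposing through $(I_1,I_2)$ and then through $(I_3,I_4)$ gives the same thing as the reverse order, and that the two operations don't interfere.

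**The real work: the swap for two blocks.** The heart of the matter is to show invariance under a single transposition of two right-halves, say swapping $I_2$ and $I_4$ (so comparing the pairings $\{(I_1,I_2),(I_3,I_4)\}$ versus $\{(I_1,I_4),(I_3,I_2)\}$) while all other blocks are held fixed; a general $\sigma$ follows since transpositions generate $\Sym\{2,4,\dots,2k\}$ and, by the commuting/locality from the previous paragraph, it suffices to verify each transposition on the two blocks it affects with the rest inert. So I reduce to the case $k=2$: show
$$\mathcal{D}_{\{(I_1,I_2),(I_3,I_4)\}}(E) = \mathcal{D}_{\{(I_1,I_4),(I_3,I_2)\}}(E) \pmod{(\mathcal{R}_2)_2}.$$
Here I would write $I_1=\{a_1,a_2\}$, $I_3=\{a_3,a_4\}$ (left), $I_2=\{b_1,b_2\}$, $I_4=\{b_3,b_4\}$ (right), expand both two-step decompositions fully into their four norm-0 terms each using $\mathcal{R}_2$ twice, and check that the two resulting linear combinations agree modulo $(\mathcal{R}_2)_2$. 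I expect both sides to expand into an expression symmetric in a way that matches; the defining relation $(\mathcal{R}_2)_2$ — exactly the relations where the "spectator" sets $I_5,\dots,I_m$ stay within one half — is what permits re-associating the cross-terms. This explicit two-block verification is the main obstacle: it is a finite but genuinely multilinear computation, and the whole point of restricting to $(\mathcal{R}_2)_2$ is that these are precisely the relations available to justify the rearrangement without disturbing the spectator blocks. Once this two-block identity is in hand, the locality/commuting structure promotes it to the full statement by expressing an arbitrary $\sigma$ as a product of transpositions and applying the two-block identity one transposition at a time.
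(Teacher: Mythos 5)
Your overall architecture is the same as the paper's: the paper also proves the lemma by induction on $k$, reduces (without loss of generality) to $\sigma(2)=4$, and applies the inductive hypothesis twice --- once to re-pair the remaining $k-1$ pairs, and once (via the case $k=2$, which is below $k$) to swap the two affected right-halves after the untouched pairs have been decomposed. Your ``locality/commuting'' reduction of a general $\sigma$ to transpositions is just an unrolled version of this induction, and it is sound, with one caveat: the unaffected blocks cannot be held ``inert''; they must be decomposed \emph{first} (which your commuting observation permits), since the two-block identity is only available when all spectator blocks are cross-blocks.

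The genuine gap is the $k=2$ case, which you correctly identify as the heart of the matter but do not prove, and the verification you sketch would not work as described. If you expand $\mathcal{D}_{\{(I_1,I_2),(I_3,I_4)\}}(E)$ and $\mathcal{D}_{\{(I_1,I_4),(I_3,I_2)\}}(E)$ into norm-$0$ terms, you get eight \emph{distinct} basis elements of $W_0$, four on each side; the two linear combinations do not ``match symmetrically'' --- their difference is a nontrivial element of $W_0$ (it is exactly the pairing-swap relation the paper later derives in Case 2). The missing observation, which is what makes the paper's one-line claim ``for $k=2$ this is a direct consequence of $(\mathcal{R}_2)_2$'' correct, is this: when $E$ has norm $2$, \emph{every individual $1$-step decomposition used in either pairing is itself a relation lying in $(\mathcal{R}_2)_2$}, because in the first step the only half-contained spectators are the two unused blocks, which may be placed in the unrestricted positions $3,4$ of the $\mathcal{R}_2$ template, and after the first step all spectators are cross-blocks. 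Hence $\mathcal{D}_{\{(I_1,I_2),(I_3,I_4)\}}(E)\equiv E\equiv \mathcal{D}_{\{(I_1,I_4),(I_3,I_2)\}}(E) \pmod{(\mathcal{R}_2)_2}$: the two sides agree because each is congruent to $E$ itself, not because their expansions coincide. (This argument is also precisely what breaks down for $k\geq 3$, where the first step has at least four half-contained spectators and so is not an $(\mathcal{R}_2)_2$ relation --- which is why any proof must induct.) Finally, you state the definition of $(\mathcal{R}_2)_2$ backwards: it consists of the relations in which \emph{none} of $I_5,\dots,I_m$ is contained in $\{2,\dots,m+1\}$ or in $\{m+2,\dots,2m+1\}$, i.e.\ all those spectators straddle the two halves; with the definition inverted, the congruences above cannot even be formulated, so this is more than a slip of wording.
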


\begin{proof}
We prove the result by induction on $k$. If $k=1$ then it is clear, since in this case $\sigma$ is the identity. For $k=2$ this is a direct consequence of $({\mathcal R}_{2})_{2}$. Now suppose that $k\geq 3$ and that the result is true for smaller values of $k$.  Without loss of generality, we can suppose $\sigma(2)=4$. Using the induction hypothesis twice we have
\begin{align*}
&\mathcal{D}_{\{(I_1,I_4),(I_3,I_{\sigma(4)}),(I_5,I_{\sigma(6)}),\dots, (I_{2k-1},I_{\sigma(2k)})\}}(E)\\
&=\mathcal{D}_{\{(I_1,I_4),(I_3,I_2),(I_5,I_6),\dots, (I_{2k-1},I_{2k})\}}(E)\\
&=\mathcal{D}_{\{(I_1,I_2),(I_3,I_4),(I_5,I_6),\dots, (I_{2k-1},I_{2k})\}}(E),
\end{align*}
as required.
\end{proof}
Now take any relation in (\ref{R2}). Without loss of generality, we can assume (after reordering $I_{1},\ldots ,I_{m}$) that half of the elements of $I_{5}\cup\dots \cup I_{m}$ are in $\{2,\dots ,m+1\}$ and half in $\{m+2,\dots ,2m+1\}$. We are using here
the fact that  the size of $(I_1\cup I_2) \cap \{2, \dots, m+1\}$ is between $0$ and $4$). We thus have now a relation of the form:
\begin{align}\label{relation*}
E(\{\{i_1,i_2\},\{j_1,j_2\},I_3,\dots,I_m\})=&-E(\{\{i_1,j_1\},\{i_2,j_2\},I_3,\dots,I_m\})\\
\nonumber &-E(\{\{i_1,j_2\},\{i_2,j_1\},I_3,\dots,I_m\}).  
\end{align}
Suppose exactly $k$ of $I_5, \dots, I_m$ are subsets of $\{2, \dots, m+1\}$. Without loss of generality we can suppose that 
$$
I_5, I_7, \dots,  I_{2k+3} \subseteq \{2, \dots, m+1\} \mbox{ and } I_6, I_8, \dots,  I_{2k+4} \subseteq \{m+2, \dots, 2m+1\}.
$$
By Lemma \ref{relationmoduloR2}, we know that modulo $(\mathcal{R}_2)_2$, relation \eqref{relation*} is equivalent to a sum of relations of the type
\begin{eqnarray}
E(\{\{i_1,i_2\},\{j_1,j_2\},I_3, I_4, J_5,\dots,J_m\}) & = & -E(\{\{i_1,j_1\},\{i_2,j_1\},I_3, I_4, J_5,\dots,J_m\}) \nonumber \\ 
      & & -E(\{\{i_1,j_2\},\{i_2,j_1\},I_3, I_4, J_5,\dots,J_m\}).  
\end{eqnarray}
where no $J_5, \dots, J_m$ is a subset of $\{2, \dots, m+1\}$ or $\{m+2, \dots, 2m+1\}$. Notice that  all the relations (2) are in $(\mathcal{R}_2)_2$.\\ \\
The conclusion is that all the relations in $W_0$ we are looking for, will be consequence of $(\mathcal{R}_2)_2$.  Let $J_5, \dots, J_m$ be fixed and consider the collection of all the relations in $(\mathcal{R}_2)_2$ where $I_5=J_5 \dots I_m=J_m$.  \\ \\
Let $\bar{E}(\{I_1, I_2, I_3, I_4\})=E(\{I_1,I_2, I_3, I_4, J_5, \dotsc, J_m\})$. Suppose 
$\{2,\dotsc, m+1\} \setminus (J_5 \cup \dotsm \cup J_m)$$=\{i_1, i_2, i_3, i_4, j_1, j_2, j_3, j_4\},$ where $i_1, i_2, i_3, i_4 \in \{2, \dotsc, m+1\}$ and $j_1, j_2, j_3, j_4 \in \{m+2, \dotsc, 2m+1\}$. We now go systematically through all possible types of $({\mathcal R}_{2})_{2}$ relations. \\ \\
{\bf Case 1}: $I_3=\{i_3,j_3\}, I_4=\{i_4, j_4\}$.  We have 
\begin{eqnarray*}
    \bar{E}(\{\{i_1, i_2\}, \{j_1, j_2\}, \{i_3, j_3\}, \{i_4, j_4\}\}) & = &-\bar{E}(\{\{i_1, j_1\}, \{i_2, j_2\}, \{i_3, j_3\}, \{i_4, j_4\}\})\\
    & & -\bar{E}(\{\{i_1, j_2\}, \{i_2, j_1\}, \{i_3, j_3\}, \{i_4, j_4\}\}).
\end{eqnarray*}
 These are defining relations. \\ \\
 {\bf Case 2}: $I_3=\{i_3,i_4\}, I_4=\{j_3, j_4\}$. We have
 \begin{eqnarray*}
     \bar{E}(\{\{i_1, i_2\}, \{j_1, j_2\}, \{i_3, i_4\}, \{j_3, j_4\}\}) & = & -\bar{E}(\{\{i_1, j_1\}, \{i_2, j_2\}, \{i_3, i_4\}, \{j_3, j_4\}\}) \\
     & &    -\bar{E}(\{\{i_1, j_2\}, \{i_2, j_1\}, \{i_3, i_4\}, \{j_3, j_4\}\})\\
     &=&\bar{E}(\{\{i_1, j_1\}, \{i_2, j_2\}, \{i_3, j_3\}, \{i_4, j_4\}\}) \\ 
      & & +\bar{E}(\{\{i_1, j_1\}, \{i_2, j_2\}, \{i_3, j_4\}, \{i_4, j_3\}\})\\
     & &+\bar{E}(\{\{i_1, j_2\}, \{i_2, j_1\}, \{i_3, j_3\}, \{i_4, j_4\}\}) \\
    & & +\bar{E}(\{\{i_1, j_2\}, \{i_2, j_1\}, \{i_3, j_4\}, \{i_4, j_3\}\}).
 \end{eqnarray*}
\normalsize
If we instead pair $\{i_1, i_2\}$ and $\{j_3, j_4\}$ we get 
\begin{eqnarray*}
    \bar{E}(\{\{i_1, i_2\}, \{j_1, j_2\}, \{i_3, i_4\}, \{j_3, j_4\}\}) & = & -\bar{E}(\{\{i_1, j_3\}, \{j_1, j_2\}, \{i_3, i_4\}, \{i_2, j_4\}\})\\
   & &-\bar{E}(\{\{i_1, j_4\}, \{j_1, j_2\}, \{i_3, i_4\}, \{i_2, j_4\}\})\\
   & =&\bar{E}(\{\{i_1, j_3\}, \{i_2, j_4\}, \{i_3, j_1\}, \{i_4, j_2\}\}) \\
  & & +\bar{E}(\{\{i_1, j_3\}, \{i_2, j_4\}, \{i_3, j_2\}, \{i_4, j_1\}\})\\
   & &\bar{E}(\{\{i_1, j_4\}, \{i_2, j_3\}, \{i_3, j_1\}, \{i_4, j_2\}\}) \\
   & & +\bar{E}(\{\{i_1, j_4\}, \{i_2, j_3\}, \{i_3, j_2\}, \{i_4, j_1\}\}).
\end{eqnarray*}
\normalsize
From these two pairings we get 
\begin{align*}
& \mathlarger{\mathlarger{\sum}}_{\tiny \sigma \in \mbox{Sym}\{1,2\}, \tau \in \mbox{Sym}\{3,4\}} \bar{E}(\{\{i_1, j_{\sigma(1)}\}, \{i_2, j_{\sigma(2)}\}, \{i_3, j_{\tau(3)}\}, \{i_4, j_{\tau(4)}\}\}) \\
= & \mathlarger{\mathlarger{\sum}}_{\tiny  \sigma \in \mbox{Sym}\{1,2\}, \tau \in \mbox{Sym}\{3,4\}} \bar{E}(\{\{i_1, j_{\tau(3)}\}, \{i_2, j_{\tau(4)}\}, \{i_3, j_{\sigma(1)}\}, \{i_4, j_{\sigma(2)}\}\}).
\end{align*}
{\bf Case 3}: $I_3=\{j_2,j_3\}, I_4=\{i_4, j_4\}$.  We have
\begin{eqnarray*}
    0 & =&\bar{E}(\{\{i_1, i_2\}, \{i_3, j_1\}, \{j_2, j_3\}, \{i_4, j_4\}\})+\bar{E}(\{\{i_1, i_3\}, \{i_2, j_1\}, \{j_2, j_3\}, \{i_4, j_4\}\})\\
    & & +\bar{E}(\{\{i_2, i_3\}, \{i_1, j_1\}, \{j_2, j_3\}, \{i_4, j_4\}\})\\
   & =&-\bar{E}(\{\{i_1, j_2\}, \{i_2, j_1\}, \{i_3, j_3\}, \{i_4, j_4\}\})-\bar{E}(\{\{i_1, j_3\}, \{i_2, j_2\}, \{i_3, j_1\}, \{i_4, j_4\}\})\\
    & & -\bar{E}(\{\{i_1, j_2\}, \{i_2, j_1\}, \{i_3, j_3\}, \{i_4, j_4\}\})-\bar{E}(\{\{i_1, j_3\}, \{i_2, j_1\}, \{i_3, j_2\}, \{i_4, j_4\}\})\\
    & & -\bar{E}(\{\{i_1, j_1\}, \{i_2, j_2\}, \{i_3, j_3\}, \{i_4, j_4\}\})-\bar{E}(\{\{i_1, j_2\}, \{i_2, j_3\}, \{i_3, j_2\}, \{i_4, j_4\}\}).
\end{eqnarray*}
This gives us the relation 
$$\mathlarger{\mathlarger{\sum}}_{\tiny \sigma \in \mbox{Sym}\{1,2,3\}} \bar{E}(\{\{i_1, j_{\sigma(1)}\}, \{i_2, j_{\sigma(2)}\}, \{i_3, j_{\sigma(3)}\}, \{i_4, j_4\}\})=0.$$
By symmetry the case of having $I_3=\{i_2, i_3\}, I_4=\{i_4, j_4\}$ works in a similar manner. \\ \\
{\bf Case 4}: $I_3=\{j_1,j_2\}, I_4=\{j_3, j_4\}$.  We have
\begin{eqnarray*}
    0 & =&\bar{E}(\{\{i_1, i_2\}, \{i_3, i_4\}, \{j_1, j_2\}, \{j_3, j_4\}\})+\bar{E}(\{\{i_1, i_3\}, \{i_2, i_4\}, \{j_1, j_2\}, \{j_3, j_4\}\})\\
    & & +\bar{E}(\{\{i_1, i_4\}, \{i_2, i_3\}, \{j_1, j_2\}, \{j_3, j_4\}\}).
\end{eqnarray*}
Modulo Case 2, the choice of pairings does not matter, therefore we get
\begin{eqnarray*}
    0 &  =&-\bar{E}(\{\{i_1, j_1\}, \{i_2, j_2\}, \{i_3, i_4\}, \{j_3, j_4\}\})-\bar{E}(\{\{i_1, j_2\}, \{i_3, j_1\}, \{i_3, i_4\}, \{j_3, j_4\}\})\\
    & & -\bar{E}(\{\{i_1, j_1\}, \{i_3, j_2\}, \{i_2, i_4\}, \{j_3, j_4\}\})-\bar{E}(\{\{i_1, j_2\}, \{i_3, j_1\}, \{i_2, i_4\}, \{j_3, j_4\}\})\\
    & & - \bar{E}(\{\{i_1, j_1\}, \{i_4, j_2\}, \{i_2, i_3\}, \{j_3, j_4\}\})-\bar{E}(\{\{i_1, j_2\}, \{i_4, j_1\}, \{i_2, i_3\}, \{j_3, j_4\}\}).
\end{eqnarray*}
Cancelling the signs gives
\begin{align*}
    &\mathlarger{\mathlarger{\sum}}_{\tiny \sigma \in \mbox{Sym}\{2,3,4\}} \bar{E}(\{\{i_1, j_1\}, \{i_2, j_{\sigma(2)}\}, \{i_3, j_{\sigma(3)}\}, \{i_4, j_{\sigma(4)}\}\}) \\
    & + \mathlarger{\mathlarger{\sum}}_{\tiny \sigma \in \mbox{Sym}\{1,3,4\}} \bar{E}(\{\{i_1, j_2\}, \{i_2, j_{\sigma(1)}\}, \{i_3, j_{\sigma(3)}\}, \{i_4, j_{\sigma(4)}\}\}) = 0,
\end{align*}
which follows from Case 3.
We have seen that $W / W\cap J \cong W_0/W_0\cap J$, as vector spaces,  and $W_0\cap J$ is generated by relations 
\begin{equation}
    \mathlarger{\mathlarger{\sum}}_{\tiny \sigma \in \mbox{Sym}\{1,2,3\}} E(\{\{i_1, j_{\sigma(1)}\}, \{i_2, j_{\sigma(2)}\}, \{i_3, j_{\sigma(3)}\}, \{i_4, j_4\}, \dotsc, \{i_m, j_m\}\})=0;
\end{equation}
and 
{\small \begin{align}
    &\mathlarger{\sum}_{\tiny \sigma \in \mbox{Sym}\{1,2\}, \tau \in \mbox{Sym}\{3,4\}} E(\{\{i_1, j_{\sigma(1)}\}, \{i_2, j_{\sigma(2)}\}, \{i_3, j_{\tau(3)}\}, \{i_4, j_{\tau(4)}\}, \{i_5, j_5\}, \dotsc, \{i_m, j_m\}\})\\
   \nonumber  =& \mathlarger{\sum}_{\tiny \sigma \in \mbox{Sym}\{1,2\},  \tau \in \mbox{Sym}\{3,4\}} \bar{E}(\{\{i_1, j_{\tau(3)}\}, \{i_2, j_{\tau(4)}\}, \{i_3, j_{\sigma(1)}\}, \{i_4, j_{\sigma(2)}\}, \{i_5, j_5\}, \dotsc, \{i_m, j_m\}\});
\end{align}}

where $\{i_{1},\ldots ,i_{m}\}=\{2,3,\ldots ,m+1\}$ and $\{j_{1},\ldots ,j_{m}\}=\{m+2,\ldots ,2m+1\}$. We show that $W_{0}\cap J$ is a proper subspace of $W_{0}$ by showing that all the relators in (3) and (4) lie in a subspace of co-dimension $1$ in $W_{0}$. \\ \\
Let $M_0$ be the subspace of $W_0$ generated by all
                   $$E(\{\{2,\sigma(j_{1})\},\ldots ,\{m+1,\sigma(j_{m})\}\})+E(\{\{2,j_{1}\},\ldots ,\{m+1,j_{m}\}\})$$
with $\{j_{1},\ldots ,j_{m}\}=\{m+2,\ldots ,2m+1\}$ and where $\sigma$ is a transposition  in $\mbox{Sym\,}(\{m+2,\ldots ,2m+1\})$.  Thus modulo $M_{0}$ we have $E(\{\{2,\sigma(m+2)\},\ldots , \{m+1,\sigma(2m+1)\}\})=\mbox{sign}(\sigma)E(\{\{2,m+2\},\ldots ,\{m+1,2m+1\}\})$. 
Notice that $M_0$ is of codimension 1 in $W_0$ and contains $W_0\cap J$. For (3) this is because  $S_{3}$ has equally many even and odd permutations and for (4) we can transform the LHS to the RHS using even permutations. Thus $W_0/W_{0}\cap J \neq 0$ that implies that $W/W\cap J\neq 0$. \\ \\
As a consequence we get the following main  result of this section.

\begin{thm}
The ideal generated by $z$ in $L$ is non-nilpotent.
\end{thm}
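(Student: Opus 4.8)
The plan is to read off non-nilpotency directly from the non-vanishing of the homogeneous pieces of $L$ that the preceding analysis produces. I would first record the outcome of that analysis: for every $m\geq 1$ we have $W/(W\cap J)\neq 0$, where $W=W(m,1,\overset{2m+1}{\dots},1)$ is the multihomogeneous subspace of $F$ of $z$-weight $m$ spanned by the elements $E(I_1,\dots,I_m)$. Since $L=F/J$ with $J$ a multihomogeneous ideal, the image of $W$ in $L$ is exactly the homogeneous component of $L$ in the corresponding multidegree, so $W/(W\cap J)\neq 0$ says precisely that this component of $L$ is nonzero. Hence, for each $m$, the algebra $L$ contains a nonzero element of $z$-weight $m$ of the form
$$
\overline{E(I_1,\dots,I_m)}=[[z,c_{I_1}],[z,c_{\{1\}\cup I_2}],[z,c_{I_3}],\dots,[z,c_{I_m}]]+J.
$$

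The second step is to place these elements inside the lower central series of $\Id_L(z)$. Each factor $[z,c_{I_j}]$ arises from $z$ by repeated bracketing with elements of $L$ and therefore lies in the ideal $\Id_L(z)$. The commutator $E(I_1,\dots,I_m)$ is a left-normed Lie bracket of exactly $m$ of these factors, so it lies in $\gamma_m(\Id_L(z))$, the $m$-th term of the lower central series of $\Id_L(z)$. Combined with the first step this gives $\gamma_m(\Id_L(z))\neq 0$ for every $m\geq 1$, which is exactly the statement that $\Id_L(z)$ is not nilpotent.

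I expect the theorem to be essentially immediate once the section's running analysis is granted: all the genuine difficulty has already been spent upstream, in proving $W/(W\cap J)\neq 0$ for all $m$. The delicate part there---and the step I would flag as the true obstacle---is reducing every relation in $(\mathcal{R}_2)$ to the norm-$0$ subspace $W_0$ via the decompositions $\mathcal{D}$ and then exhibiting the codimension-one subspace $M_0\supseteq W_0\cap J$, which forces $W_0/(W_0\cap J)\neq 0$. For the theorem itself, the only thing that genuinely needs checking is the bookkeeping of the last two steps: that $\overline{E(I_1,\dots,I_m)}$ really does lie in $\gamma_m(\Id_L(z))$ and is nonzero in $L$, both of which follow transparently from the multihomogeneity of $J$.
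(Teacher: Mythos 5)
Your proposal is correct and follows the paper's own route: the paper likewise deduces the theorem directly from the preceding analysis, namely that $W/(W\cap J)\neq 0$ for all $m\geq 1$, so that $L$ has nonzero multihomogeneous elements of arbitrary $z$-weight $m$. Your final bookkeeping step --- that $E(I_1,\dots,I_m)$ is a bracket of $m$ elements of $\Id_L(z)$ and hence lies in $\gamma_m(\Id_L(z))$, so these components' non-vanishing rules out nilpotency --- is precisely the deduction the paper leaves implicit in its ``as a consequence'' remark, made explicit.
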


\section{The normal subgroup $\langle x \rangle^G$}
\mbox{}\\
In Section 2 we considered the largest locally finite $p$-group $G = \langle x, a_1, a_{2}, \dots \rangle$ satisfying the following relations: \\
\begin{enumerate}
    \item $\langle a_i \rangle^G$ is abelian for all $i\geq 1$;
    \item  $\langle x \rangle^G$ is metabelian;
    \item $x^{p}=a_{1}^{p}=a_{2}^{p}=\cdots =1$;
    \item if $s \neq x$ is a simple commutator in $x,a_{1},a_{2},\ldots $, where $t(s) \geq 2$ or $t(s) \leq -2$, then $s=1$.
\end{enumerate}
\mbox{}\\
Now we use the Lie algebra $L$ constructed in the Section 3 to get a group $H=\langle 1+\ad(z),1+\ad(c_1),1+\ad(c_2), \dots \rangle$. In this section we will show that $H$ is a homomorphic image of $G$ by proving that the relations (1)-(4) hold in $H$ where
$x, a_{1}, a_{2},\ldots $ are replaced by $1+\ad(z), 1+\ad(c_1), 1+\ad(c_2),\ldots $ . We will also use the work in Section 2 and Section 3 to show that $\langle 1+\ad(z)\rangle^{H}$ is non-nilpotent from which follows that $\langle x\rangle^{G}$ is non-nilpotent.

\begin{lem}\label{L1}
We have $\ad(z)^2=0$ and $\ad(z)\ad(w)\ad(z)=0$ for any simple Lie product $w$ in $z, c_1, c_2, \dots$.
\end{lem}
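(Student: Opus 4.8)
The plan is to argue entirely through the multigrading, using the \emph{type} function together with the description of which homogeneous components survive in $L = F/J$. Two facts carry the whole argument. First, $\ad(z)$ maps $L$ into $\Id_L(z)$ and lowers type by exactly $2$: indeed $[u,z]\in\Id(z)$ for every $u$, and since $t(z)=-2$ and $t([u,v])=t(u)+t(v)$ we get $t([u,z])=t(u)-2$. Second, condition (3) places every basis element of $\Id_F(z)$ of type $<-1$ or $>1$, other than $z$ itself, into $J$. As $J$ is multihomogeneous, this says that in $L$ every nonzero homogeneous element has type at most $1$, and that the only surviving homogeneous element of type $-2$ is (a scalar multiple of) $z$. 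Because $J$ is multihomogeneous and $\ad(z),\ad(w)$ are homogeneous operators, it suffices to test both identities on homogeneous elements.

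For $\ad(z)^2=0$ I would take a homogeneous $v\in L$, necessarily of type $t(v)\le 1$, and note that $\ad(z)^2(v)=[[v,z],z]$ is homogeneous of type $t(v)-4\le -3$. This is a commutator lying in $\Id_L(z)$ whose type is strictly below $-2$, so it cannot be a multiple of $z$ and hence vanishes by condition (3). No case analysis is needed here.

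For $\ad(z)\,\ad(w)\,\ad(z)=0$ I would again fix a homogeneous $v$ and a simple product $w$; if $w=0$ in $L$ there is nothing to prove, so assume $t(w)\le 1$. The output $\ad(z)\,\ad(w)\,\ad(z)(v)$ lies in $\Id_L(z)$ and is homogeneous of type $t(v)+t(w)-4\le 1+1-4=-2$. The one delicate point, which I expect to be the only genuine obstacle, is that this bound permits the type to equal exactly $-2$, the degree in which $z$ survives, so vanishing cannot be read off from the type alone. I would resolve this by observing that the two outer applications of $\ad(z)$ each introduce an explicit factor $z$, so the output has multiweight at least $2$ in $z$ and therefore cannot be a nonzero scalar multiple of $z$. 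Thus the output is a commutator in $\Id_L(z)$ of type $\le -2$ that is not $z$, and condition (3) forces it to be $0$, completing the proof.
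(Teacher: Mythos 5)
Your proof is correct and is essentially the paper's own argument: both rest on the type count $t([u,z])=t(u)-2$ together with the observation that the only homogeneous element of type $<-1$ that survives in $L$ is $z$ itself, so any output of type $\leq -3$ (first identity) or of type $\leq -2$ but of $z$-degree $\geq 2$ (second identity) must vanish by condition (3). If anything, you are more careful than the paper at the one delicate point, namely ruling out type exactly $-2$ in $\ad(z)\ad(w)\ad(z)$ by noting that the output has multiweight at least $2$ in $z$ and so cannot be a scalar multiple of $z$; the paper instead first reduces to the case $t(u)=1$ via $[u,z]=0$ for $t(u)\neq 1$ and leaves that borderline case implicit.
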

\begin{proof}
Let $u$ be a simple Lie product in $z, c_1, c_2, \dots$. Notice first that if $u$ is non-trivial then the type of $[u,z,z]$ is less than or equal to $-3$ and therefore $[u,z,z]=0$. Turning to the second claim we know that  
$[u,z]=0$ if $u$ is of type different from $1$. We can therefore assume that $u$ is of type 1. Then if $w$ is non-zero, we have that $[u,z,w,z]$ is of type at most $-2$ and $[u,z,w,z]=0$.
\end{proof}

\begin{lem}
Let $w_H$ be a simple group commutator in $1+\ad(z), 1+\ad(c_1), 1+\ad(c_2), \dots$ and let $w_L$ be the corresponding Lie commutator in $z, c_1, c_2, \dots$. Then
$$
w_H=1+\ad(w_L).
$$
\end{lem}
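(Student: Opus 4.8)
The plan is to induct on the weight (the number of generators) of the simple group commutator $w_H$. For the base case, $w_H$ is one of the generators $1+\ad(z)$ or $1+\ad(c_i)$ and $w_L$ is the corresponding single variable, so $w_H = 1+\ad(w_L)$ trivially. For the inductive step I would write $w_H = [u_H, v_H]$, where $u_H, v_H$ are simple group commutators of smaller weight; by the inductive hypothesis $u_H = 1+\ad(u_L)$ and $v_H = 1+\ad(v_L)$, where $u_L, v_L$ are the corresponding Lie commutators. Everything then reduces to the single identity
$$[1+\ad(u_L),\,1+\ad(v_L)] = 1+\ad([u_L,v_L])$$
computed inside $\mbox{End}(L)$.

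To prove this identity I set $\alpha = \ad(u_L)$, $\beta = \ad(v_L)$ and write $A = 1+\alpha$, $B = 1+\beta$. A direct manipulation in the associative algebra $\mbox{End}(L)$ gives $AB - BA = \alpha\beta-\beta\alpha$, whence
$$[A,B] = A^{-1}B^{-1}AB = 1 + A^{-1}B^{-1}(\alpha\beta-\beta\alpha).$$
Since the adjoint map is a Lie (anti)homomorphism, $\alpha\beta-\beta\alpha = \pm\ad([u_L,v_L])$, the sign depending only on the fixed conventions for $\ad$ and for the group commutator; I would fix these at the outset so that the leading term is exactly $\gamma := \ad([u_L,v_L])$. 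It then remains to show $A^{-1}B^{-1}\gamma = \gamma$. Because $A$ and $B$ are invertible, it suffices to establish the two operator identities $\alpha\gamma = 0$ and $\beta\gamma = 0$: indeed $\beta\gamma = 0$ forces $B^{-1}\gamma = \gamma$, and then $\alpha\gamma = 0$ forces $A^{-1}\gamma = \gamma$, so that $A^{-1}B^{-1}\gamma = \gamma$.

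The crux is therefore to prove $\ad(u_L)\ad([u_L,v_L]) = 0$ and $\ad(v_L)\ad([u_L,v_L]) = 0$, and this is where the defining structure of $L$ enters through a case split. If $u_L$ is a nonzero product of weight at least $2$, or $u_L = c_i$, then $u_L$ involves some generator $c_i$, so both $u_L$ and $[u_L,v_L]$ lie in the ideal $\Id(c_i)$; applying $\ad([u_L,v_L])$ keeps us inside $\Id(c_i)$, and the final bracket with $u_L \in \Id(c_i)$ lands in $(\Id(c_i))' = 0$ since $\Id(c_i)$ is abelian. If instead $u_L = z$, then expanding $\ad([z,v_L]) = \ad(z)\ad(v_L)-\ad(v_L)\ad(z)$ gives
$$\ad(z)\ad([z,v_L]) = \ad(z)^2\ad(v_L) - \ad(z)\ad(v_L)\ad(z) = 0,$$
using precisely the two identities $\ad(z)^2 = 0$ and $\ad(z)\ad(v_L)\ad(z)=0$ from Lemma~\ref{L1}. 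The same argument with the roles of $u_L$ and $v_L$ interchanged gives $\beta\gamma = 0$, and the degenerate cases (when $u_L$, $v_L$, or $[u_L,v_L]$ vanishes in $L$) are immediate, since then $\gamma = 0$ and both sides of the identity equal $1$.

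The step I expect to be the main obstacle is the vanishing of the correction term $A^{-1}B^{-1}\gamma - \gamma$: a priori this is an infinite combination of products of three or more adjoints, and it is not obvious that it collapses. The resolution is the clean reduction to $\alpha\gamma = \beta\gamma = 0$, together with the observation that these two operator identities are exactly what the abelianness of $\Id(c_i)$ and Lemma~\ref{L1} deliver; additional care is needed to reconcile the bracket and commutator sign conventions so that the leading term comes out as $+\ad([u_L,v_L])$ rather than $\ad([v_L,u_L])$.
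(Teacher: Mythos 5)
Your proof is correct and is essentially the paper's proof: the same induction on weight, the same reduction to the single identity $[1+\ad(u_L),\,1+\ad(v_L)]=1+\ad([u_L,v_L])$ in $\mbox{End}(L)$, and the same two vanishing facts (Lemma \ref{L1} when the entry is $z$, abelianness of $\Id(c_i)$ when the entry involves some $c_i$). The only difference is bookkeeping: the paper writes the inverses explicitly as $1-\ad(u_L)$, $1-\ad(v_L)$ and expands the fourfold product, whereas you use the ring identity $A^{-1}B^{-1}AB=1+A^{-1}B^{-1}(AB-BA)$ together with the annihilation relations $\ad(u_L)\ad([u_L,v_L])=\ad(v_L)\ad([u_L,v_L])=0$ --- the same computation in a slightly cleaner package, which moreover makes explicit the case analysis (abelian ideal versus Lemma \ref{L1}) that the paper compresses into a single citation.
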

\begin{proof}
We show this by using induction on the weight of $w_H$. If the weight is 1 then this is obvious. Now suppose the weight is $k \geq 2$ and the result holds whenever the weight is smaller. We have $w_H=[u_H, v_H]$, where $u_H, v_H$ are simple commutators of smaller weight in $1+\ad(z), 1+\ad(c_1), 1+\ad(c_2), \dotsc$. Let $u_L, v_L$ be the corresponding simple commutators in $z, c_1, c_2, \dotsc$. Using the induction hypothesis we have 
\begin{align*}
    w_H=& [u_H, v_H]=(1+\ad(u_L))^{-1} (1+\ad(v_L))^{-1} (1+\ad(u_L))(1+\ad(v_L))\\
    =& (1-\ad(u_L))(1-\ad(v_L))(1+\ad(u_L))(1+\ad(v_L)) \mbox{ (using Lemma \ref{L1})}\\
    =&1+\ad(u_L)\ad(v_L)-\ad(v_L)\ad(u_L)\\
    =&1+\ad([u_L, v_L])\\
    =&1+\ad(w_L).
\end{align*}
\end{proof}
Now we show that $H$ satisfies the relations (1)-(4) for $G$ where $x, a_1, a_2, \dots$ are replaced by  $1+\ad(z), 1+\ad(c_1), 1+\ad(c_2), \dots$. More precisely, we have the following. 

\begin{pr}
Let $H$ be as before. Then
\begin{enumerate}
    \item $\langle 1+\ad(c_i) \rangle^H$ is abelian for all $i\geq 1$;
    \item  $\langle 1+ \ad(z) \rangle^H$ is metabelian;
    \item $(1+\ad(z))^{p}=(1+\ad(c_{1}))^{p}=(1+\ad(c_{2}))^{p}=\cdots =1$;
    \item if $s \neq 1+\ad(z)$ is a simple commutator in $1+\ad(z),1+\ad(c_{1}),1+\ad(c_{2}),\ldots $, where $t(s) \geq 2$ or $t(s) \leq -2$, then $s=1$.
\end{enumerate}
\end{pr}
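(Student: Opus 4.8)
The plan is to verify the four relations separately, in each case pushing the group statement down to a statement about the operators $\ad(\,\cdot\,)$ inside $\mbox{End}(L)$, and then reading it off from Lemma \ref{L1}, from the lemma above expressing a simple group commutator $w_H$ as $1+\ad(w_L)$, and from the fact that (since $L=F/J$) the ideal $\Id_L(c_i)$ is abelian while $\Id_L(z)$ is metabelian. Relations (3) and (4) are immediate. For (3) I would note $\ad(z)^2=0$ by Lemma \ref{L1}, and $\ad(c_i)^2=0$ because $[c_i,[c_i,w]]=0$ for all $w$ (both entries lie in the abelian ideal $\Id_L(c_i)$); hence for $\ell\in\{z,c_1,c_2,\dots\}$ the binomial expansion collapses to $(1+\ad(\ell))^p=1+p\,\ad(\ell)=1$ over $\mathbb{F}_p$. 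For (4), if $s\neq 1+\ad(z)$ is a simple commutator with $t(s)\ge 2$ or $t(s)\le -2$, then $s=1+\ad(s_L)$ where the corresponding Lie commutator $s_L$ has the same type and is not $z$; by the defining condition on $J$ such $s_L$ lies in $J$, so $s_L=0$ in $L$ and $s=1$.

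For (1) I would exhibit an explicit abelian normal subgroup. Put $\mathcal{A}_i=\{\,1+\ad(v):v\in\Id_L(c_i)\,\}$. Since $\Id_L(c_i)$ is an abelian ideal, $\ad(u)\ad(v)=0$ for $u,v\in\Id_L(c_i)$, whence $(1+\ad(u))(1+\ad(v))=1+\ad(u+v)$; thus $\mathcal{A}_i$ is an abelian subgroup, a homomorphic image of the additive group of $\Id_L(c_i)$. It remains to see that $\mathcal{A}_i$ is normal in $H$, for which it suffices to conjugate by the generators: conjugation by $1+\ad(c_j)$ is conjugation by an automorphism of $L$ (here one uses $\ad(c_j)^2=0$ together with $[[c_j,a],[c_j,b]]=0$), which preserves the ideal $\Id_L(c_i)$; and conjugation by $1+\ad(z)$ carries $1+\ad(v)$ to $1+\ad(v')$ with $v'\in\Id_L(c_i)$, the triple product $\ad(z)\ad(v)\ad(z)$ vanishing by Lemma \ref{L1}. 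Hence $\langle 1+\ad(c_i)\rangle^H\subseteq\mathcal{A}_i$ is abelian.

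The main obstacle is (2). The first step is to show that every conjugate of $1+\ad(z)$, and of its inverse $1+\ad(-z)$, is again of the form $1+\ad(v)$ with $v\in\Id_L(z)$: conjugation by $1+\ad(c_j)$ and by $1+\ad(z)$ each preserve this form by the computations above, using that $\Id_L(z)$ is an ideal. Writing $N=\langle 1+\ad(z)\rangle^H$, this set of conjugates is closed under $H$-conjugation, so $N'$ is generated as a group by the commutators $[1+\ad(u),1+\ad(v)]$ with $u,v\in\Id_L(z)$. Using $\ad(u)\ad(v)-\ad(v)\ad(u)=\ad([u,v])$ one computes
\[
[1+\ad(u),\,1+\ad(v)]=1+P,\qquad P=(1+\ad(u))^{-1}(1+\ad(v))^{-1}\,\ad([u,v]),
\]
where $[u,v]\in\Id_L(z)'$.

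The decisive structural facts, both coming from $\Id_L(z)'$ being an \emph{abelian} ideal of $L$, are that $P$ maps $L$ into $\Id_L(z)'$ and that $P$ annihilates $\Id_L(z)'$: indeed $\ad([u,v])$ sends $L$ into the ideal $\Id_L(z)'$ and kills $\Id_L(z)'$ (as $[[u,v],r]=0$ for $r\in\Id_L(z)'$), while $(1+\ad(u))^{-1}$ and $(1+\ad(v))^{-1}$ preserve $\Id_L(z)'$. Consequently, for any two operators $P,Q$ arising this way one has $PQ=QP=0$, so $(1+P)(1+Q)=1+P+Q=(1+Q)(1+P)$; thus the generators of $N'$ pairwise commute, $N'$ is abelian, and $N$ is metabelian. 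I expect the delicate points of (2) to be precisely the bookkeeping here: confirming the orbit description of the conjugates and justifying that the inverses $(1+\ad(u))^{-1}$, interpreted through the locally finite expansion $\sum_{k\ge 0}(-\ad(u))^k$, genuinely preserve the ideal $\Id_L(z)'$.
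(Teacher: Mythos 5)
Your argument is correct, but for parts (1) and (2) it takes a genuinely different route from the paper's. The paper deduces all four parts uniformly from its correspondence lemma $w_H=1+\ad(w_L)$: for (1) it notes that any simple group commutator with a repeated occurrence of $1+\ad(c_i)$ equals $1+\ad(w_L)$ with $w_L=0$ because $\Id_L(c_i)$ is abelian, and for (2) that any commutator of two simple commutators, each containing $1+\ad(z)$ at least twice, equals $1+\ad([u_L,v_L])=1$ because $\Id_L(z)$ is metabelian; it then asserts that these vanishing statements imply the normal closures are abelian, respectively metabelian. That final step (the reduction from arbitrary products of conjugates to simple commutators) is left implicit in the paper, whereas you carry it out explicitly at the operator level: for (1) you exhibit the abelian normal subgroup $\{1+\ad(v):v\in\Id_L(c_i)\}$, and for (2) you show that all $H$-conjugates of $(1+\ad(z))^{\pm 1}$ have the form $1+\ad(v)$ with $v\in\Id_L(z)$, so that $N'$ is generated by elements $1+P$ where $P$ maps $L$ into $\Id_L(z)'$ and annihilates $\Id_L(z)'$, whence $PQ=0$ for any two such operators and $N'$ is abelian. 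Your route is longer but fills in precisely the group-theoretic bookkeeping the paper glosses over; the paper's is shorter and mirrors the verification one would make for $G$ itself. One simplification to your (2): the worry about interpreting $(1+\ad(u))^{-1}$ through an infinite series is unnecessary, since your generating set of conjugates is closed under inversion --- $(1+\ad(u))^{-1}$ is a conjugate of $(1+\ad(z))^{\mp 1}$, hence itself of the form $1+\ad(u')$ with $u'\in\Id_L(z)$, and any such operator preserves the ideal $\Id_L(z)'$ outright. Parts (3) and (4) coincide with the paper's proof.
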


\begin{proof}
(1) Let $w_H$ be a simple group commutator in $1+\ad(z), 1+\ad(c_1), 1+\ad(c_2), \dots$ with a repeated occurrence of $1+\ad(c_i)$. Then $w_H=1+\ad(w_L)$ and as $\Id(c_i)$ is abelian, $\ad(w_L)=0$ and thus $w_H=1$. This implies that $\langle 1+\ad(c_i) \rangle^H$ is abelian for all $i\geq 1$. \\
(2) Let $u_H, v_H$ be simple commutators in $1+\ad(z), 1+\ad(c_1), 1+\ad(c_2), \dots$ where both have at least two occurrences of $1+\ad(z)$. Then
$$[u_H, v_H]=1+\ad([u_L, v_L])$$
and since $\Id(z)$ is metabelian, then $[u_L, v_L]=0$ and $[u_H, v_H]=1$. It follows then that $\langle 1+\ad(z) \rangle ^H$ is metabelian.\\
(3) Clearly $(1+\ad(z))^p=1+p\ad(z)=1$ and $(1+\ad(c_i))^p=1+p\ad(c_i)=1$.\\
(4) Let $w_H \neq 1+\ad(z)$ be a simple commutator in $1+\ad(z), 1+\ad(c_1), 1+\ad(c_2), \dotsc$, where $t(w_H) \geq 2$ or $t(w_H) \leqslant -2$. Then, $w_H=1+\ad(w_L)$, where $w_L$ is a Lie commutator in $z, c_1, c_2, \dotsc$, where $t(w_L) \geq 2$ or $t(w_L) \leqslant -2$. Here $w_L=0$ and so $w_H=1$. This completes the proof.
\end{proof}

\begin{thm}
The normal closure of $1+\ad(z)$ is not nilpotent.
\end{thm}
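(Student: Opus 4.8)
The plan is to prove that $N:=\langle 1+\ad(z)\rangle^{H}$ is not nilpotent by exhibiting, for every $m\ge 1$, a nontrivial element of the lower central series term $\gamma_m(N)$. The bridge to the Lie side is Lemma 5.2. First I would record that for each finite multiset $I=\{i_1,\dots,i_l\}$ the simple commutator
$$
g_I:=[1+\ad(z),1+\ad(c_{i_1}),\dots,1+\ad(c_{i_l})]=1+\ad([z,c_I])
$$
lies in $N$, since any left-normed commutator beginning with $1+\ad(z)$ lies in the normal closure of $1+\ad(z)$. More generally, since a nested bracket of simple commutators is again a simple commutator, Lemma 5.2 gives, for any blocks $I_1,\dots,I_j$,
$$
[g_{I_1},g_{I_2},\dots,g_{I_j}]=1+\ad\big([[z,c_{I_1}],[z,c_{I_2}],\dots,[z,c_{I_j}]]\big),
$$
and the left-hand side, being a $j$-fold commutator of elements of $N$, lies in $\gamma_j(N)$.

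Next I would invoke Theorem 4.2: the ideal $\Id_L(z)$ is not nilpotent, so $\gamma_m(\Id_L(z))\ne 0$ for every $m$. The one genuinely non-formal point is that $1+\ad(v)\ne 1$ requires $\ad(v)\ne 0$, i.e. $v\notin Z(L)$, so knowing $v\ne 0$ is not by itself enough. Here I would argue by contradiction that $\gamma_m(\Id_L(z))\not\subseteq Z(L)$ for any $m$: if $\gamma_m(\Id_L(z))\subseteq Z(L)$, then
$$
\gamma_{m+1}(\Id_L(z))=[\gamma_m(\Id_L(z)),\Id_L(z)]\subseteq[Z(L),L]=0,
$$
contradicting non-nilpotency. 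Since the elements $[z,c_I]$ generate $\Id_L(z)$, the space $\gamma_m(\Id_L(z))$ is spanned by left-normed brackets $[[z,c_{I_1}],\dots,[z,c_{I_j}]]$ with $j\ge m$; as $Z(L)$ is a subspace, at least one such single bracket $v=[[z,c_{I_1}],\dots,[z,c_{I_j}]]$ must satisfy $\ad(v)\ne 0$.

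Finally I would assemble the pieces: for this $v$ we have $1+\ad(v)=[g_{I_1},\dots,g_{I_j}]\ne 1$, and this element lies in $\gamma_j(N)\subseteq\gamma_m(N)$ because $j\ge m$. Hence $\gamma_m(N)\ne 1$ for every $m$, so $N$ is not nilpotent. Combined with Proposition 5.3, which exhibits $N=\langle 1+\ad(z)\rangle^{H}$ as a homomorphic image of $\langle x\rangle^{G}$, this also yields the non-nilpotency of $\langle x\rangle^{G}$, the outcome advertised in the introduction.

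I expect the main obstacle to be precisely the centrality issue in the middle paragraph: converting the purely Lie-theoretic non-vanishing supplied by Theorem 4.2 into the stronger statement $\ad(v)\ne 0$ that is needed for a nontrivial group element. The contradiction argument above sidesteps any explicit computation. The tempting alternative of using the distinguished generator $E(I_1,\dots,I_m)$ from Section 4 directly would instead force one to verify $[z,E]\ne 0$ in $L$ (the obstruction to $\ad(E)=0$, since $[E,c_j]=0$ holds automatically by type $2$), a more delicate reduction modulo $J$ that I would rather avoid.
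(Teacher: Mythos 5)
Your proof is correct, and it departs from the paper's argument at exactly the step you flagged. The paper's proof is the explicit route you said you wanted to avoid: it forms the commutator of $[1+\ad(z),1+\ad(c_1),1+\ad(c_2),1+\ad(c_3)]$ with $[1+\ad(z),1+\ad(c_4),1+\ad(c_5)],\ldots,[1+\ad(z),1+\ad(c_{2m}),1+\ad(c_{2m+1})]$, identifies it via Lemma 5.2 with $1+\ad(v)$ where $v=[[z,c_1,c_2,c_3],[z,c_4,c_5],\ldots,[z,c_{2m},c_{2m+1}]]$, and then simply asserts that $[v,z]\neq 0$ in $L$. That assertion is precisely the delicate point: up to sign and reordering, $v$ is one of the elements $E(I_1,\ldots,I_m)$ of Section 4, but Section 4 establishes $W/(W\cap J)\neq 0$ only by exhibiting a codimension-one subspace $M_0\supseteq W_0\cap J$, a test that certifies the norm-zero elements but not the paper's $v$ (for $m\geq 2$ its norm is positive, and its norm-zero decomposition cancels modulo $M_0$); so the paper's one-line claim in fact requires additional tracing through the relations of Sections 3 and 4 that the paper does not supply. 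Your centre argument bypasses this entirely: by Theorem 4.2, $\gamma_m(\Id_L(z))\subseteq Z(L)$ is impossible since it would force $\gamma_{m+1}(\Id_L(z))\subseteq [Z(L),L]=0$, so among the spanning brackets $[[z,c_{I_1}],\ldots,[z,c_{I_j}]]$, $j\geq m$, some has non-zero $\ad$, and Lemma 5.2 converts it into a non-trivial element of $\gamma_m(N)$. What you lose is the explicit witness, which in the paper mirrors the non-trivial group commutators advertised in Section 2; what you gain is that every step follows from statements the paper actually proves (Theorem 4.2, Lemma 5.2, normality of $N$), with no further reduction modulo $J$ needed. In that respect your argument is not merely different but arguably more complete than the paper's own proof.
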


\begin{proof}
Consider 
\begin{align*}
    &[[1+\ad(z), 1+\ad(c_1), 1+\ad(c_2), 1+\ad(c_3)], [1+\ad(z), 1+\ad(c_4), 1+\ad(c_5)], \\ 
    & \dotsc, [1+\ad(z), 1+\ad(c_{2m}), 1+\ad(c_{2m+1})]]\\
    & =1+\ad([[z, c_1, c_2, c_3], [z, c_4, c_5], \dotsc, [z, c_{2m}, c_{2m+1}]]).
\end{align*}
Notice that 
$$1+\ad([[z, c_1, c_2, c_3], [z, c_4, c_5], \dotsc,[z, c_{2m}, c_{2m+1}]]) \neq 1,$$
since, for example 
$[[z, c_1, c_2, c_3], [z, c_4, c_5], \dotsc, [z, c_{2m}, c_{2m+1}], z]) \neq 0.$
Thus, we have shown that there is a simple commutator of arbitrary weight $m$ in $(1+\ad(z))$ that is non-trivial. This finishes the proof.
\end{proof}
{\it Acknowledgement}. We acknowledge the EPSRC (grant number 16523160) for support. We also thank Gareth Tracey and James Williams for participating in discussions at an early stage of this project. In particular we would like to thank the latter for some useful GAP calculations.

\vspace{0.3cm}

\end{document}